\theoremstyle{plain}
\newtheorem{theorem}{Theorem}
\newtheorem{lemma}[theorem]{Lemma}
\newtheorem{conjecture}[theorem]{Conjecture}
\newtheorem{\proofname}{Proof}
\long\def\delete#1{}
\long\def\ps{1.6}
\def\ch {{\rm ch}}
\def\C {{\mathcal C}}
\title{\bf Edge Roman domination on graphs}
\author{Gerard J. Chang$^{1 2}$\thanks{E-mail: gjchang@math.ntu.edu.tw. Supported in part by the Ministry of Science and Technology under grant NSC101-2115-M-002-005-MY3.} ~~~
        Sheng-Hua Chen$^1$\thanks{E-mail: b91201040@ntu.edu.tw.} ~~~
        Chun-Hung Liu$^{3}$\thanks{E-mail: cliu87@math.gatech.edu. This work is based on discussions when the author visited the National Center for Theoretical Sciences, Taipei Office.} \\
 {\footnotesize $^1$Department of Mathematics, National Taiwan University, Taipei 10617, Taiwan} \\[-0.2cm]
 {\footnotesize $^2$National Center for Theoretical Sciences, Taipei Office, Taipei 10617, Taiwan} \\[-0.2cm]
 {\footnotesize $^3$School of Mathematics, Georgia Institute of Technology, Atlanta, Georgia 30332, USA} \\[-0.2cm]
}
\date{August 10, 2014} 
\begin{document}
\maketitle

\begin{abstract}
An edge Roman dominating function of a graph $G$ is a function $f\colon E(G) \rightarrow \{0,1,2\}$ satisfying the condition that every edge $e$ with $f(e)=0$ is adjacent to some edge $e'$ with $f(e')=2$.
The edge Roman domination number of $G$, denoted by $\gamma'_R(G)$, is the minimum weight $w(f) = \sum_{e\in E(G)} f(e)$ of an edge Roman dominating function $f$ of $G$.
This paper disproves a conjecture of Akbari, Ehsani, Ghajar, Jalaly Khalilabadi and Sadeghian Sadeghabad stating that if $G$ is a graph of maximum degree $\Delta$ on $n$ vertices, then $\gamma_R'(G) \le \lceil \frac{\Delta}{\Delta+1} n \rceil$.
While the counterexamples having the edge Roman domination numbers $\frac{2\Delta-2}{2\Delta-1} n$, we prove that $\frac{2\Delta-2}{2\Delta-1} n + \frac{2}{2\Delta-1}$ is an upper bound for connected graphs.
Furthermore, we provide an upper bound for the edge Roman domination number of $k$-degenerate graphs, which generalizes results of Akbari, Ehsani, Ghajar, Jalaly Khalilabadi and Sadeghian Sadeghabad.
We also prove a sharp upper bound for subcubic graphs.

In addition, we prove that the edge Roman domination numbers of planar graphs on $n$ vertices is at most $\frac{6}{7}n$, which confirms a conjecture of Akbari and Qajar.
We also show an upper bound for graphs of girth at least five that is 2-cell embeddable in surfaces of small genus.
Finally, we prove an upper bound for graphs that do not contain $K_{2,3}$ as a subdivision, which generalizes a result of Akbari and Qajar on outerplanar graphs.

\bigskip
\noindent
{\bf Keywords.} Edge Roman domination, $k$-degenerate graph, subcubic graph, planar graph, $K_{2,3}$-subdivision-free graph.
\end{abstract}

\section{Introduction}

The articles by ReVelle \cite{r1, r2} in the Johns Hopkins Magazines suggested a new variation of domination called Roman domination, see also \cite{rr} for an integer programming formulation of the problem.
Since then, there have been several articles on Roman domination and its variations, such as \cite{ahlzs, ckpw, cl, hv}.
Emperor Constantine had the requirement that an army or legion could be sent from its home to defend a neighboring location only if there was a second army which would stay and protect the home.
Thus, there are two types of armies, stationary and traveling.
Each vertex (city) has no army must have a neighboring vertex with a traveling army.
Stationary armies then dominate their own vertices, and a vertex with two armies is dominated by its stationary army, and its open neighborhood is dominated by the traveling army.

We may formulate the problem in terms of graphs.
Graphs are simple in this paper.
A {\it Roman dominating function} of a graph $G$ is a function $f\colon V(G) \to \{0, 1, 2\}$ such that every vertex $v$ with $f(v)=0$ is adjacent to some vertex $u$ with $f(u) = 2$.
The {\it weight} of a Roman dominating function $f$ is the value $w(f) = \sum_{v\in V(G)} f(v)$.
The {\it Roman domination number} of $G$, denoted by $\gamma_R(G)$, is the minimum weight of a Roman dominating function of $G$.

Recently, Roushini Leely Pushpam and Malini Mai \cite{Pushpam} initiated the study of the edge version of Roman domination.
An {\it edge Roman dominating function} of a graph $G$ is a function $f\colon E(G) \rightarrow \{0,1,2\}$ such that every edge $e$ with $f(e)=0$ is adjacent to some edge $e'$ with $f(e')=2$.
The {\it weight} of an edge Roman dominating function $f$ is the value $w(f) = \sum_{e\in E(G)} f(e)$.
The {\it edge Roman domination number} of $G$, denoted by $\gamma'_R(G)$, is the minimum weight of an edge Roman dominating function of $G$.
In fact, the edge Roman domination number of $G$ equals the Roman domination number of its line graph.
However, we are interesting in finding upper bound of $\gamma'_R(G)$ in terms of $\lvert V(G) \rvert$ instead of $\lvert E(G) \rvert$.
So reducing problem to the line graph is usually not helpful to obtain a non-trivial upper bound.

Roushini Leely Pushpam et al.~\cite{Pushpam} established some properties of edge Roman dominating functions and determined the edge Roman dominating numbers of paths and cycles: $\gamma'_R(P_n)=\lfloor \frac{2n}{3}\rfloor$ and $\gamma'_R(C_n)=\lceil \frac{2n}{3}\rceil$.
Akbari et al.~\cite{Akbari1} gave an upper bound for a graph in terms of its maximum degree and order: $\gamma'_R(G) \leq \frac{2\Delta}{2\Delta+1}n$ for graphs $G$ of maximum degree $\Delta$ on $n$ vertices.
They then conjectured the following.

\begin{conjecture} {\bf \cite{Akbari1}} \label{conj1}
  If $G$ is a graph of maximum degree $\Delta$ on $n$ vertices, then $\gamma'_R(G)\leq \lceil\frac{\Delta}{\Delta+1}n \rceil.$
\end{conjecture}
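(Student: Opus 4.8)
The plan is to prove the bound $\gamma'_R(G)\le\lceil\frac{\Delta}{\Delta+1}n\rceil$ by induction on $n$, phrasing the inductive step as an amortized charging argument in which, for each group of vertices deleted, the weight added to the edge Roman dominating function is at most a $\frac{\Delta}{\Delta+1}$ fraction of the number of deleted vertices. Because the conjectured bound carries a ceiling, I would first reduce to connected graphs, while recording carefully that passing from the components $G_1,\dots,G_k$ to $G$ only yields $\sum_i\lceil\frac{\Delta}{\Delta+1}n_i\rceil$, which can exceed the single ceiling $\lceil\frac{\Delta}{\Delta+1}n\rceil$. I flag this ceiling bookkeeping at the outset, since it is exactly the place where disjoint unions must be examined and where the sought inequality is most fragile.

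For the inductive step I would peel off sparse pieces, on which the budget is comfortable, and push the difficulty into a dense remainder. The model efficient gadget is a star $K_{1,d}$ with $d\ge 2$: assigning $2$ to one edge at the centre dominates all $d$ of its edges at total cost $2$, accounting for $d+1$ vertices at ratio $\frac{2}{d+1}\le\frac{2}{3}\le\frac{\Delta}{\Delta+1}$; a bare pendant edge can instead be given value $1$ at ratio $\frac{1}{2}$. So whenever $G$ contains a leaf, a pendant path, or a cut-vertex detaching a tree-like region, I can dominate and delete that region within budget and recurse, checking only that the residual graph still has maximum degree at most $\Delta$. The routine part is to organize these peels so that the deleted pieces partition $V(G)$ and each is charged at ratio at most $\frac{\Delta}{\Delta+1}$. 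Iterating them leaves a graph with large minimum degree and no cheap star to remove, which is where the real work lies.

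The hard part, and the step I expect to be the genuine obstacle, is this dense ``core'' case in which $G$ is nearly $\Delta$-regular with no leaves and no cut-vertices --- the prototype being the balanced complete bipartite graph $K_{\Delta-1,\Delta}$. Here setting one edge to $2$ dominates only $2\Delta-2$ of the $(\Delta-1)\Delta$ edges, and iterating leaves an equally dense piece, so the best weight-to-vertex ratio one can extract is $\frac{2\Delta-2}{2\Delta-1}$, which already exceeds $\frac{\Delta}{\Delta+1}$ once $\Delta\ge 3$. A single such component survives only because the ceiling supplies one unit of slack, for instance $\gamma'_R(K_{2,3})=4=\lceil\frac{3}{4}\cdot 5\rceil$; but that slack is a global additive constant, so a ratio-only induction cannot absorb even one dense core, still less several at once. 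To make the induction close I would strengthen the hypothesis to a bound of the form $\frac{\Delta}{\Delta+1}n+c$ with an explicit small $c$, and the critical test is whether $c$ can be kept below the single unit of ceiling slack on every dense core simultaneously. I anticipate that this is precisely the point at which the argument must be examined most carefully, since the dense balanced bipartite configurations are where the inequality is in the greatest danger.
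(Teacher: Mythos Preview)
Your proposal cannot succeed, because the statement is false: the paper does not prove Conjecture~\ref{conj1} but \emph{disproves} it. You have, however, put your finger precisely on the reason it fails. The dense core you isolate --- a copy of $K_{\Delta-1,\Delta}$ on $2\Delta-1$ vertices requiring weight $2\Delta-2$ --- already has ratio $\frac{2\Delta-2}{2\Delta-1}>\frac{\Delta}{\Delta+1}$ for $\Delta\ge 3$, and you correctly observe that the single unit of ceiling slack cannot absorb several such cores at once. The paper turns exactly this observation into a family of counterexamples: the graph $G_{r,t}$ is obtained by taking $t$ disjoint copies of $K_{r,r+1}$ and linking them into a cycle by adding $t$ bridging edges between consecutive copies, so that the maximum degree stays $\Delta=r+1$ while $n=(2r+1)t$. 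One then computes $\gamma'_R(G_{r,t})=2rt=\frac{2\Delta-2}{2\Delta-1}n$, and for $r\ge 2$ and $t$ a multiple of $r+2$ this strictly exceeds $\lceil\frac{\Delta}{\Delta+1}n\rceil$.

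So the step you flagged as ``the point at which the argument must be examined most carefully'' is not merely delicate but fatal: no strengthening to $\frac{\Delta}{\Delta+1}n+c$ with bounded $c$ can hold, since the gap $\bigl(\frac{2\Delta-2}{2\Delta-1}-\frac{\Delta}{\Delta+1}\bigr)n$ grows linearly in $n$. What the paper salvages is that $\frac{2\Delta-2}{2\Delta-1}n+\frac{2}{2\Delta-1}$ \emph{is} a correct upper bound for connected graphs (Theorem following Lemma~\ref{non-regular}), proved via the removable-triple machinery of Lemmas~\ref{remove}--\ref{mad}. Your amortized peeling intuition is sound; it simply converges to the coefficient $\frac{2\Delta-2}{2\Delta-1}$ rather than $\frac{\Delta}{\Delta+1}$.
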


They also established several results for special graphs as follows.
For a graph $G$ of maximum degree $\Delta$ on $n$ vertices, if $G$ has a perfect matching, then $\gamma'_R(G)\leq \frac{2\Delta-1}{2\Delta}n$.
If $T$ is a tree of $n$ vertices, then $\lceil \frac{2(n-\ell(T)+1)}{3}\rceil \leq \gamma'_{R}(T)\leq \lceil \frac{2(n-1)}{3}\rceil = \lfloor \frac{2n}{3} \rfloor$ where $\ell(T)$ is the number of leaves, and the equality holds if and only if $T=P_n$.
If $n\geq 2$, then $\gamma'_R(P_2 \Box P_n)=\lceil \frac{4n}{3}\rceil$ and $\gamma'_R(P_3 \Box P_n)=2n$.
If $n\geq 1$, then $\gamma'_R(Q_n)\geq \frac{2^{n+1}n}{3n-1}$.
Akbari et al.~\cite{Akbari2} gave the following two results on planar graphs.
If $G$ is outerplanar, then $\gamma'_{R}(G)\leq \frac{4}{5}n$.
If $G$ is planar and claw-free, then $\gamma'_{R}(G)\leq \frac{6}{7}n$.
They conjectured that the claw-freeness in the above result can be removed.

\begin{conjecture} {\bf \cite{Akbari2}} \label{conj2}
  If $G$ is a planar graph of $n$ vertices, then $\gamma'_R(G)\leq \frac{6}{7}n.$
\end{conjecture}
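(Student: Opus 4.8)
The plan is to prove the bound by induction on $\lvert V(G)\rvert$, equivalently to derive a contradiction from a vertex-minimal counterexample $G$. Since $\gamma'_R$ and the quantity $\frac67\lvert V(G)\rvert$ are both additive over connected components, some component of $G$ is again a counterexample, so we may assume $G$ is connected; the bound holds by direct inspection once $\lvert V(G)\rvert$ is below a fixed constant, so $\lvert V(G)\rvert$ may be taken large; and every induced subgraph $G-S$ with $S$ a nonempty proper subset of $V(G)$ is a smaller planar graph, so by minimality $\gamma'_R(G-S)\le\frac67(\lvert V(G)\rvert-\lvert S\rvert)$.

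The core of the first part is to rule out a list of \emph{reducible configurations}: small subgraphs whose presence lets one conclude, for a suitable vertex set $S$ with $\lvert S\rvert=k$, that some optimal edge Roman dominating function of $G-S$ extends to one of $G$ with extra weight at most $\frac{6k}{7}$, contradicting minimality (because then $\gamma'_R(G)\le\frac67(\lvert V(G)\rvert-k)+\frac{6k}{7}=\frac67\lvert V(G)\rvert$). Some cases are clean. If $v$ is a leaf, $w$ its neighbour, and $x$ another neighbour of $w$, then deleting $S=\{v,w,x\}$, copying an optimal function of $G-S$, assigning value $2$ to the edge $wx$, and assigning value $0$ to every other edge incident to $S$ gives a valid function of $G$ of extra weight $2$, and $2<\frac{18}{7}$; hence $\delta(G)\ge 2$. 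Likewise a degree-$2$ vertex $v$ whose two neighbours $a,b$ are adjacent is reducible by deleting $\{v,a,b\}$ and putting value $2$ on $ab$. The subtlety, which I expect to be the main obstacle, is that deleting only $k$ vertices frees up merely $\frac{6k}{7}$ units of weight, so for $k\le 2$ there is essentially no room, and chains of degree-$2$ vertices and other low-degree clusters cannot be absorbed by such naive local surgery; one has to delete enough vertices at once and, crucially, choose carefully among the optimal functions of $G-S$ (or locally reroute a nearby value-$2$ edge), since the extension cost is governed precisely by whether the edges of $G-S$ meeting $S$ already carry value $2$. Producing a list of configurations that is simultaneously reducible in this sense and unavoidable in every planar graph is the delicate heart of the argument.

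With such a list in hand, the second part is a discharging argument on a fixed plane embedding of $G$. Assign to each vertex $v$ the charge $\deg_G(v)-6$ and to each face $g$ the charge $2\deg(g)-6$, a standard choice for which Euler's formula gives total charge $-12<0$; then design rules moving charge from vertices and faces of large degree to the low-degree vertices and short faces that $G$ must contain (it has no reducible configuration), arranged so that after redistribution every vertex and every face has nonnegative charge, a contradiction. The constant $6$ --- hence the ratio $\frac67=1-\frac17$ --- is expected to be exactly the value at which the per-deleted-vertex budget $\frac67$ and the density constraints of Euler's formula just barely reconcile, so that the configurations too expensive to reduce are precisely the ones a planar graph cannot be built entirely from. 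The remaining points are routine: the small base cases, behaviour at cut vertices and bridges, and the absence of parallel edges after deletion (automatic, as $G$ is simple and we delete only vertices together with their incident edges).
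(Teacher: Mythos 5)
Your overall plan --- minimal counterexample, reducible configurations handled by deleting a vertex set $S$ and extending an optimal function of $G-S$ at cost at most $\tfrac{6}{7}\lvert S\rvert$, then discharging to show the surviving graph cannot be planar --- is exactly the shape of the paper's argument. But the proposal stops precisely where the content is: you explicitly defer both the list of reducible configurations and the discharging rules, and you correctly identify the handling of low-degree vertices (``chains of degree-$2$ vertices and other low-degree clusters'') as the main obstacle without resolving it. That obstacle is resolved in the paper by a general reduction lemma (Lemma \ref{mad}): for a vertex $v$ of degree $d$ and a matching $M$ in $G[N(v)]$, one does not delete a tiny ad hoc set but takes $S$ to be $N[v]$ together with the endpoints of a maximum matching $M'$ from the $M$-unmatched neighbours of $v$ to vertices outside $N[v]$; assigning $2$ to $M\cup M'$ (with a small case analysis on the leftover neighbours) yields a ``removable triple'' with $\lvert S\rvert\le 2d+1$ and extension cost at most $\tfrac{2d-2\lvert M\rvert}{2d+1-2\lvert M\rvert}\,\lvert S\rvert$. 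This single lemma disposes of \emph{all} vertices of degree at most $3$ (ratio $\le\tfrac67$ already with $M=\emptyset$), and moreover gives the two further configurations the discharging actually needs: a $4$-vertex on a $3$-face ($\lvert M\rvert\ge1$, ratio $\le\tfrac67$) and a $5$-vertex on three or more $3$-faces ($\lvert M\rvert\ge2$, ratio $\le\tfrac67$). Without something of this kind, your framework has no way past minimum degree $2$ or $3$, since, as you note, deleting $k$ vertices only buys $\tfrac{6k}{7}$.

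The second half is likewise only a declaration of intent. The paper's discharging uses charge $\deg(x)-4$ on both vertices and faces (total $-4\lvert V\rvert+4\lvert E\rvert-4\lvert F\rvert<0$ in the plane or projective plane), with the single rule that each vertex incident to $t>0$ triangular faces sends $\tfrac{\deg(v)-4}{t}$ to each such face; nonnegativity of the new charges follows because, by the reductions above, $3$-faces see only vertices of degree $\ge5$, $5$-vertices lie on at most two $3$-faces (so send $\ge\tfrac12$), and $d$-vertices with $d\ge6$ send $\ge\tfrac{d-4}{d}\ge\tfrac13$. Your proposed charges $\deg(v)-6$ and $2\deg(g)-6$ could conceivably be made to work, but since you give neither rules nor the configurations they must exploit, there is nothing to verify. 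As written, the proposal is a correct high-level outline with the two essential ingredients --- the neighbourhood-plus-matching reduction lemma and the specific unavoidable set with its discharging verification --- missing, so it does not constitute a proof.
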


We address extremal problems on edge Roman domination in this paper.
We disprove Conjecture \ref{conj1} in Section 2 and prove an essentially tight upper bound for $k$-degenerate graphs in Section 3.
More precisely, we prove that $\gamma'_R(G) \leq \frac{2k}{2k+1}\lvert V(G) \rvert$ for $k$-degenerate graphs $G$, and $\gamma'_R(G) \leq \frac{2\Delta-2}{2\Delta-1} \lvert V(G) \rvert + \frac{2}{2\Delta-1}$ for graphs $G$ of maximum degree $\Delta$.

In Section 4, we prove that $\gamma_R'(G) \leq \frac{4}{5} \lvert V(G) \rvert$ for subcubic graphs $G$ other than $K_{3,3}$.
This bound is attained by infinitely many graphs.
Furthermore, this result not only improves the mentioned result when $\Delta=3$ but also is a preparation for a result in the next section.

In Section 5, we confirm Conjecture \ref{conj2} and show that the same upper bound holds for graphs 2-cell embeddable in the plane or the projective plane.
We then improve the upper bound for graphs of girth at least five that can be drawn in surfaces of small genus.
The second result takes the advantage of the result on subcubic graphs in Section 4.

Finally, in Section 6, we prove that $\gamma'_R(G) \leq \frac{4}{5}\lvert V(G) \rvert$ for graphs that do not contain a subgraph isomorphic to a subdivision of $K_{2,3}$, which generalizes a result of Akbari et al.~\cite{Akbari2} on outerplanar graphs. 
Note that $C_5$ attains the bound $\frac{4}{5}n$, and the coefficient $\frac{4}{5}$ of $n$ cannot be improved by excluding finitely many graphs: let $G$ be the graph obtained from the disjoint union of $k$ $5$-cycles by adding a vertex adjacent to a vertex of each $5$-cycle, then $\gamma'_R(G) =4k = \frac{4}{5} |V(G)| - \frac{4}{5}$.
We will prove that the mentioned example is more or less the only example for graphs that attain this coefficient of $n$.
More precisely, we shall prove that the upper bound can be improved if no $5$-cycle in the graph can be separated from the rest of the graph by deleting at most one vertices.

Now we fix some notation that will be used in the rest of this paper.
Let $G$ be a graph.
For every $X \subseteq V(G)$, we define $N(X)$ to be the set of vertices of $G-X$ adjacent to a vertex in $X$, and we define $N[X]$ to be $N(x) \cup X$.
When $X$ consists of only one vertex $v$, we denote $N(X)$ and $N[X]$ by $N(v)$ and $N[v]$, respectively.
In a graph $G$, for a subset $S \subseteq V(G)$ the {\it subgraph induced by $S$} is the graph $G[S]$ with vertex set $S$ and edge set $\{xy \in E(G)\colon x,y \in S\}$.
The {\it deletion} of $S$ from $G$, denoted by $G-S$, is the induced subgraph $G[V(G)-S]$.
A {\it matching} $M$ of $G$ is a subset of edges of $G$ such that no two edges in $M$ are adjacent.
The set of all end vertices of the edges in $M$ is denoted by $V(M)$.
A subset of vertices is {\it stable} if every pair of vertices in the set are non-adjacent.
For every integer $k$, we say that $G$ is {\it $k$-degenerate} if every subgraph of $G$ contains a vertex of degree at most $k$.

\section{Counterexamples to Conjecture \ref{conj1}}

This section constructs counterexamples to Conjecture \ref{conj1}.
We first consider the complete bipartite graph $K_{r,s}$ with partite sets $X = \{x_1, x_2, \ldots, x_r\}$ and $Y = \{y_1, y_2, \ldots, y_s\}$.


\begin{theorem} \label{bipartite}
  If $1 \le r \le s$, then $\gamma_R'(K_{r,s})=2r$ for $r<s$ and $\gamma_R'(K_{r,s})=2r-1$ for $r=s$.
\end{theorem}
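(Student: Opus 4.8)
The plan is to establish matching lower and upper bounds for $\gamma_R'(K_{r,s})$ by a counting argument on the partite sets. Throughout, write $X$ for the smaller side with $|X| = r$ and $Y$ for the larger side with $|Y| = s$, and let $f$ be an arbitrary edge Roman dominating function.

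First I would prove the lower bound. The key observation is that if $f$ has no edge of weight $2$, then every edge must receive weight at least $1$ (since an edge of weight $0$ needs a neighbouring edge of weight $2$), forcing $w(f) \ge |E(K_{r,s})| = rs \ge 2r$, which already beats the claimed value when $s \ge 2$; the only delicate small case is $r = s = 1$, handled directly. So I may assume some edge $e_0$ has $f(e_0) = 2$, say $e_0 = x_1 y_1$. The edges of weight $0$ are then exactly confined to be adjacent to weight-$2$ edges. For the core estimate, I would show that for each vertex $v$, the edges incident to $v$ that receive weight $0$ must all be ``covered'' by a weight-$2$ edge at $v$; more usefully, consider the bipartite structure: let $A \subseteq X$ and $B \subseteq Y$ be the sets of vertices incident to some edge of weight $2$. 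Every edge with both endpoints outside $A \cup B$ has weight at least $1$. Counting carefully — each vertex of $X \setminus A$ sends $s - |B|$ edges to $Y \setminus B$, each of positive weight — and combining with the contribution $2(|A| + |B| \text{-ish})$ from the weight-$2$ edges, I would derive $w(f) \ge 2r$ in general, with the improvement to $2r - 1$ lost precisely when $r = s$ and one can ``share'' a weight-$2$ edge symmetrically. The cleanest route for the lower bound when $r < s$: show that $f$ restricted to the edges at any fixed $x_i \in X$, together with a global parity/covering argument, forces $\sum_i (\text{weight at } x_i) \ge 2$ for each $i$ unless that $x_i$ is ``served'' from outside, and a matching-type argument shows at most a bounded number of $x_i$ can be served externally, netting $2r$; the $r = s$ case saves exactly one unit because a single weight-$2$ edge $x_1 y_1$ can serve both the last row and the last column simultaneously.

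For the upper bound I would exhibit explicit functions. When $r < s$: for each $i \in \{1, \dots, r\}$ set $f(x_i y_i) = 2$ and $f(e) = 0$ for every other edge; every edge $x_i y_j$ with $j \ne i$ is adjacent to $x_i y_i$, and every edge $x_k y_i$ with $k \ne i$ is adjacent to $x_i y_i$, and any edge $x_k y_j$ with $k, j > r$ is adjacent to, say, $x_1 y_j$ — wait, that has weight $0$; instead note $j \le s$ and we need $j$ or $k$ to be $\le r$. Since $r < s$ there are columns $y_j$ with $j > r$, so I must be more careful: use $f(x_i y_i) = 2$ for $i \le r$, which dominates all edges incident to $x_1, \dots, x_r$, i.e. all edges, since $X$ has only $r$ vertices. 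That works: every edge of $K_{r,s}$ is incident to some $x_i$, hence adjacent to $x_i y_i$. This gives weight $2r$. When $r = s$: take $f(x_i y_i) = 2$ for $i = 1, \dots, r-1$, and on the remaining vertices $x_r, y_r$ set $f(x_r y_r) = 1$; the first $r-1$ weight-$2$ edges dominate all edges incident to $x_1, \dots, x_{r-1}$ or $y_1, \dots, y_{r-1}$, and the only remaining edge is $x_r y_r$ itself, which has weight $1$. Total weight $2(r-1) + 1 = 2r - 1$.

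The main obstacle is the lower bound, specifically proving $\gamma_R'(K_{r,s}) \ge 2r$ cleanly and then tightening to $2r - 1$ exactly when $r = s$; the upper bound is routine once the right explicit functions are written down. I expect the lower bound is best handled by choosing a minimum-weight edge Roman dominating function $f$, letting $E_2$ be its set of weight-$2$ edges, and arguing that $G - V(E_2)$ is a complete bipartite graph on sides of sizes $r - |X \cap V(E_2)|$ and $s - |Y \cap V(E_2)|$ all of whose edges have weight $\ge 1$, then optimizing the resulting lower bound $2|E_2| + (r - a)(s - b)$ over the feasible pairs $(a,b)$ with $a \le r$, $b \le s$, $|E_2| \ge \max(a,b)$ (since the weight-$2$ edges form a subgraph touching $a$ vertices on one side and $b$ on the other). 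A short case analysis on whether $(r-a)(s-b) = 0$ then yields $2\max(a,b) \ge 2r$ in the degenerate case and a strictly positive extra term otherwise, with the $r = s$ slack isolated to the configuration $a = b = r - 1$, $|E_2| = r - 1$, leftover a single edge of weight $1$.
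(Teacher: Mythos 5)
Your proposal follows essentially the same route as the paper: the same diagonal constructions give the upper bounds $2r$ and $2r-1$, and your final lower-bound plan (take a minimum-weight $f$, count the weight-$2$ edges, observe that every edge joining two vertices touched by no weight-$2$ edge must get weight at least $1$, and optimize $2\lvert E_2\rvert+(r-a)(s-b)$) is exactly the paper's argument, which uses the single parameter $a=\lvert E_2\rvert$ and the bound $w(f)\ge 2a+(r-a)(s-a)$.

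One step you leave imprecise: in the non-degenerate case, ``a strictly positive extra term'' beyond $2\max(a,b)$ is not by itself enough when $r<s$ and $\max(a,b)<r$. You need the paper's observation in two-parameter form: if $s-b\ge 2$ then $(r-a)(s-b)\ge 2(r-a)$, so the total is at least $2a+2(r-a)=2r$; and if $b=s-1\ge r$ then already $2\lvert E_2\rvert\ge 2b\ge 2r$. For $r=s$ the bound $2a+(r-a)^2\ge 2r-1$ is the completing-the-square inequality $(r-a-1)^2\ge 0$, with slack exactly at $a=r-1$ as you predict. These repairs are routine, and the earlier, vaguer ``parity/matching'' sketch in your middle paragraph should simply be discarded in favor of your final counting route.
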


\begin{proof}
For $r<s$, the function $f$ defined by $f(x_iy_i)=2$ for $1 \le i \le r$ and $f(x_iy_j)=0$ for all other edges $x_iy_j$ is an edge Roman dominating function of weight $2r$, which gives $\gamma'_R(K_{r,s})\le 2r$.
For $r=s$, a modification on $f(x_r y_r)=1$ gives that $\gamma'_R(K_{r,s})\le 2r-1$.

On the other hand, suppose $f$ is an edge Roman dominating function of $K_{r,s}$ with the minimum weight.
Assume there are $a$ edges $e$ with $f(e)=2$.
If $a \ge r$, then $\gamma'_R(K_{r,s}) \geq w(f) \ge 2a \ge 2r$, and we are done.
So we may assume that $a<r$.
Then $X$ contains at least $r-a$ vertices and $Y$ contains at least $s-a$ vertices that are not incident to any edge $e$ with $f(e)=2$.
Hence there are $(r-a)(s-a)$ edges $e'$ having $f(e')=1$.
These give $w(f) \ge 2a+(r-a)(s-a)$.

If $r<s$, then $s-a \ge 2$ and so $\gamma_R(K_{r,s}) = w(f) \ge 2a+2(r-a)=2r$.
If $r=s$, then $2r-1 \ge \gamma'_R(K_{r,s}) = w(f) \ge 2a +(r-a)^2$.
That is, $0 \ge (r-a-1)^2$.
This implies that $r-a-1=0$ and $\gamma'_R(K_{r,s}) = w(f) = 2r-1$.
\end{proof}

Notice that $K_{r,r}$ has maximum degree $\Delta=r$ and $n=2r$ vertices.
By Theorem \ref{bipartite}, $\gamma'_R(K_{r,r})=2r-1=\frac{2\Delta-1}{2\Delta}n$ which is the same as the upper bound $\lceil \frac{\Delta}{\Delta+1}n \rceil = \lceil 2r-2 +\frac{2}{r+1} \rceil = 2r-1$ in Conjecture \ref{conj1}.
While the gap between $\frac{2\Delta-1}{2\Delta} n$ and $\frac{\Delta}{\Delta+1} n$ being $\frac{\Delta-1}{2\Delta(\Delta+1)} n$, the reasons for the above values to be the same are $\Delta$ being close to $n$ and taking ceiling.
Similar situation happens for $K_{r,r+1}$, which has maximum degree $\Delta=r+1$ and $n=2r+1$ vertices.
By Theorem \ref{bipartite}, $\gamma'_R(K_{r,r+1})=2r=\frac{2\Delta-2}{2\Delta-1}n$ which is the same as $\lceil \frac{\Delta}{\Delta+1}n \rceil = \lceil 2r-1 +\frac{3}{r+2} \rceil = 2r$.
Also, the gap between $\frac{2\Delta-2}{2\Delta-1} n$ and $\frac{\Delta}{\Delta+1} n$ is $\frac{\Delta-2}{(\Delta+1)(2\Delta-1)} n$.

To get counterexamples, we modify complete bipartite graphs to obtain graphs whose $\Delta$ are far away from  $n$.
Consider the graph $G_{r,t}$ obtained from $t$ copies of $K_{r,r+1}$ by adding edges $y_{r+1}^i y_{1}^{i+1}$ for $1 \le i \le t$ with $y_{1}^{t+1}=y_{1}^1$, where the partite sets of the $i$-th $K_{r,r+1}$ are $X_i=\{x_1^i, x_2^i, \ldots, x_r^i\}$ and $Y_i=\{y_1^i, y_2^i, \ldots, y_{r+1}^i\}$.
See Figure \ref{fig1} for $G_{2,4}$.

\begin{figure}[htb]
\setlength{\unitlength}{0.1cm}
\begin{center}
\begin{picture}(110,10)(0,0)
 \put(00,00){\circle*{\ps}} \put(00,-3.7){$y_1^1$}
 \put(10,00){\circle*{\ps}}
 \put(20,00){\circle*{\ps}} \put(19,-3.7){$y_3^1$}
 \put(05,10){\circle*{\ps}} \put(05,10){\line(-1,-2){5}} \put(05,10){\line(01,-2){5}} \put(05,10){\line(03,-2){15}}
 \put(15,10){\circle*{\ps}} \put(15,10){\line(-3,-2){15}} \put(15,10){\line(-1,-2){5}} \put(15,10){\line(01,-2){5}}
 \put(20,00){\line(1,0){10}}
 \put(30,00){\circle*{\ps}} \put(29,-3.7){$y_1^2$}
 \put(40,00){\circle*{\ps}}
 \put(50,00){\circle*{\ps}} \put(49,-3.7){$y_3^2$}
 \put(35,10){\circle*{\ps}} \put(35,10){\line(-1,-2){5}} \put(35,10){\line(01,-2){5}} \put(35,10){\line(03,-2){15}}
 \put(45,10){\circle*{\ps}} \put(45,10){\line(-3,-2){15}} \put(45,10){\line(-1,-2){5}} \put(45,10){\line(01,-2){5}}
 \put(50,00){\line(1,0){10}}
 \put(60,00){\circle*{\ps}} \put(59,-3.7){$y_1^3$}
 \put(70,00){\circle*{\ps}}
 \put(80,00){\circle*{\ps}} \put(79,-3.7){$y_3^3$}
 \put(65,10){\circle*{\ps}} \put(65,10){\line(-1,-2){5}} \put(65,10){\line(01,-2){5}} \put(65,10){\line(03,-2){15}}
 \put(75,10){\circle*{\ps}} \put(75,10){\line(-3,-2){15}} \put(75,10){\line(-1,-2){5}} \put(75,10){\line(01,-2){5}}
 \put(80,00){\line(1,0){10}}
 \put(090,00){\circle*{\ps}} \put(089,-3.7){$y_1^4$}
 \put(100,00){\circle*{\ps}}
 \put(110,00){\circle*{\ps}} \put(110,-3.7){$y_3^4$}
 \put(095,10){\circle*{\ps}} \put(095,10){\line(-1,-2){5}} \put(095,10){\line(01,-2){5}} \put(095,10){\line(03,-2){15}}
 \put(105,10){\circle*{\ps}} \put(105,10){\line(-3,-2){15}} \put(105,10){\line(-1,-2){5}} \put(105,10){\line(01,-2){5}}
 \put(000,00){\line(0,-1){8}}
 \put(000,-8){\line(1,00){110}}
 \put(110,-8){\line(0,01){8}}
\end{picture}
\end{center}
\caption{The graph $G_{2,4}$.}
\label{fig1}
\end{figure}
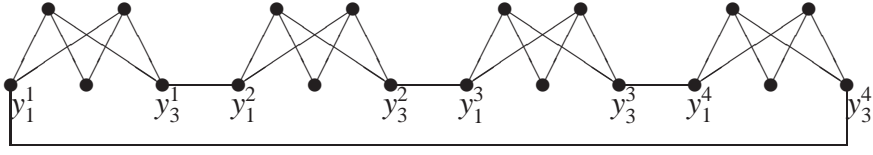

\begin{theorem} \label{Grt}
  If $r \ge 1$ and $t \ge 1$, then $\gamma'_R(G_{r,t})=2rt$.
\end{theorem}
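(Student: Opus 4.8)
The plan is to prove $\gamma'_R(G_{r,t}) \le 2rt$ by an explicit function and $\gamma'_R(G_{r,t}) \ge 2rt$ by an amortized counting argument over the $t$ copies of $K_{r,r+1}$. For the upper bound I would take the function $f$ that gives value $2$ to each edge in $M = \{x_j^i y_j^i : 1 \le i \le t,\ 1 \le j \le r\}$ and value $0$ to every other edge; then $w(f) = 2rt$, and $f$ is an edge Roman dominating function because every edge $x_j^i y_k^i$ of the $i$-th copy either lies in $M$ or is adjacent to $x_j^i y_j^i \in M$ (as $1 \le j \le r$), while each connecting edge $y_{r+1}^i y_1^{i+1}$ is adjacent to $x_1^{i+1} y_1^{i+1} \in M$.

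For the lower bound, fix a minimum edge Roman dominating function $f$. Let $H_i = G_{r,t}[X_i \cup Y_i]$, a copy of $K_{r,r+1}$; let $e_i = y_{r+1}^i y_1^{i+1}$ be the $i$-th connecting edge (indices modulo $t$, with $e_0 = e_t$); and set $c_i = 1$ if $f(e_i) = 2$ and $c_i = 0$ otherwise. The key point is that $f_i := f|_{E(H_i)}$ dominates every edge of $H_i$ from within $H_i$, \emph{except} possibly for $0$-edges incident to $y_1^i$ (which can fail only when $c_{i-1}=1$) or to $y_{r+1}^i$ (only when $c_i=1$), because $y_1^i$ and $y_{r+1}^i$ are the only vertices of $H_i$ lying on a connecting edge. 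To turn this into a bound I would introduce, for a graph $K$ and $S \subseteq V(K)$, the relaxed parameter $\gamma'_R(K;S)$: the minimum weight of $g\colon E(K) \to \{0,1,2\}$ in which every $0$-edge is adjacent to a $2$-edge \emph{or} incident to a vertex of $S$ (so $\gamma'_R(K;\emptyset)=\gamma'_R(K)$). Then $f_i$ is feasible for $\gamma'_R(H_i; S_i)$, where $S_i$ contains $y_1^i$ if $c_{i-1}=1$ and $y_{r+1}^i$ if $c_i=1$; hence $w(f_i) \ge \gamma'_R(K_{r,r+1}; S_i)$ with $|S_i| = c_{i-1}+c_i \le 2$.

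The crux is then the inequality $\gamma'_R(K_{r,r+1}; S) \ge 2r - |S|$ for every set $S$ of at most two vertices in the part of size $r+1$, which I would prove by the counting argument behind Theorem~\ref{bipartite}: if an optimal $g$ has $a$ edges of value $2$, then at least $r-a$ vertices of the small part and at least $(r+1-|S|)-a$ vertices of the large part outside $S$ meet no $2$-edge, every edge joining two such vertices has value $1$, so $w(g) \ge 2a + \max(0,r-a)\cdot\max(0,r+1-|S|-a)$, and minimizing over $a \ge 0$ reduces (using $|S|\le 2$) to elementary facts such as $(b-1)(b-2)\ge 0$ for integers $b \ge 2$. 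Granting this, $w(f_i) \ge 2r - c_{i-1} - c_i$; since also $f(e_i)\ge 2c_i$ for every $i$ and $E(G_{r,t})$ is the disjoint union of the sets $E(H_i)$ and the edges $e_i$, we get
\[
w(f) = \sum_{i=1}^{t} w(f_i) + \sum_{i=1}^{t} f(e_i) \ \ge\ \sum_{i=1}^{t}\bigl(2r - c_{i-1} - c_i\bigr) + 2\sum_{i=1}^{t} c_i \ =\ 2rt,
\]
since $\sum_{i=1}^t c_{i-1} = \sum_{i=1}^t c_i$ by cyclicity; together with the upper bound this gives $\gamma'_R(G_{r,t}) = 2rt$. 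I expect the main obstacle to be establishing the relaxed estimate $\gamma'_R(K_{r,r+1};S)\ge 2r-|S|$ with exactly the right constant — nothing can be lost, or the telescoping fails — along with verifying carefully that $f_i$ really is feasible for the $S_i$-relaxed problem (a single connecting $2$-edge rescues only $0$-edges of $H_i$ at $y_1^i$ or $y_{r+1}^i$); the degenerate cases $t=1$ (the two connecting edges of the single copy coincide) and $r=1$ (where $G_{r,t}$ is the cycle $C_{3t}$) should also be checked for consistency.
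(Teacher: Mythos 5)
Your proposal is correct and takes essentially the same route as the paper: the same explicit weight-$2rt$ function for the upper bound, and for the lower bound the same per-copy counting, since your relaxed estimate $\gamma'_R(K_{r,r+1};S)\ge 2r-\lvert S\rvert$ is exactly the paper's inequality $2a_i+b_i+b_{i-1}+\max\{0,r-a_i\}\max\{0,r+1-a_i-b_i-b_{i-1}\}\ge 2r$, established by the same factorization, followed by the same cyclic amortization of the connecting-edge weights. The $\gamma'_R(\cdot\,;S)$ formalism is just a repackaging of that argument.
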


\begin{proof}
The function $f$ defined by $f(x_j^i y_j^i)=2$ for $1 \le i \le t$ and $1 \le j \le r$, and $f(e)=0$ for all other edges $e$ is an edge Roman dominating function of weight $2rt$.
So $\gamma'_R(G_{r,t})\le 2rt$.

On the other hand, let $f$ be an edge Roman dominating function of $G_{r,t}$ with the minimum weight.
For every $1 \leq i \leq t$, let $a_i$ be the number of edges $e$ with $f(e)=2$ in the $i$-th $K_{r,r+1}$, and let $b_i=1$ if $f(y_{r+1}^i y_{1}^{i+1})=2$, and $b_i=0$ otherwise.
We define $b_0=b_t$.
Then $X_i$ has at least $\max\{0,r-a_i\}$ vertices and $Y_i$ has at least $\max\{0,r+1-a_i-b_i-b_{i-1}\}$ vertices that are not incident to any edge $e$ with $f(e)=2$.
Hence there are at least $\max\{0,r-a_i\} \max\{0,r+1-a_i-b_i-b_{i-1}\}$ edges $e'$ having $f(e')=1$.
These give
$$
    w(f) \ge \sum_{i=1}^t (2a_i+2b_i+\max\{0,r-a_i\} \max\{0,r+1-a_i-b_i-b_{i-1}\}) = \sum_{i=1}^t I_i,
$$
where $I_i = 2a_i+b_i+b_{i-1}+\max\{0,r-a_i\} \max\{0,r+1-a_i-b_i-b_{i-1}\}$.
It is sufficient to prove that $I_i \geq 2r$ for $1 \leq i \leq t$.
Suppose to the contrary that $I_i<2r$ for some $i$.
So $a_i<r$ and $r+1-a_i-b_i-b_{i-1} \geq 0$.
Then $I_i = 2a_i+b_i+b_{i-1}+(r-a_i)(r+1-a_i-b_i-b_{i-1})=2r+(r-a_i-1)(r-a_i-b_i-b_{i-1})$.
Observe that $(r-a_i-1)(r-a_i-b_i-b_{i-1}) \ge 0$, since either $r=a_i+1$ or $r-a_i-b_i-b_{i-1} \geq 0$.
So $I_i \geq 2r$ as desired.
\end{proof}

Notice that the graph $G_{r,t}$ has maximum degree $\Delta=r+1$ and $n=(2r+1)t$ vertices.
By Theorem \ref{Grt}, $\gamma'_R(G_{r,t})=2rt =\frac{2\Delta-2}{2\Delta-1} n > \frac{\Delta}{\Delta+1} n
= \lceil \frac{\Delta}{\Delta+1} n \rceil$ when $r \ge 2$ and $t$ a multiple of $r+2$.
This disproves Conjecture \ref{conj1}.
In fact, we shall prove that $\frac{2\Delta-2}{2\Delta-1}$ is asymptotic the optimal coefficient of $n$ for the upper bound of the edge Roman domination in Section 3.

\section{$k$-degenerate graphs}

Recall that a graph $G$ is $k$-degenerate if for every subgraph $H$ of $G$, the minimum degree  $\delta(H)$ of $H$ is at most $k$.
While the counterexamples in the previous section having the edge Roman domination numbers $\frac{2\Delta-2}{2\Delta-1} n$, this section shall prove that this is an upper bound for $k$-degenerate graphs.
It also establishes a close upper bound $\frac{2\Delta-2}{2\Delta-1} n + \frac{2}{2\Delta-1}$ for connected graphs.

We first need several useful lemmas that will be frequently applied in the rest of the paper.
A {\it removable triple} of a graph $G$ is a triple $(S,M_2, M_1)$, where $S$ is a nonempty subset of $V(G)$ and $M_2$ and $M_1$ are disjoint matchings in $G[S]$ such that every edge $e \in E(G)-M_1$ incident to a vertex in $S$ is adjacent to some edge in $M_2$.
We define the {\it ratio} $\rho(S,M_2,M_1)$ of a removable triple $(S,M_2,M_1)$  to be $\frac{2|M_2| + |M_1|}{|S|}$.


\begin{lemma} \label{remove}
  If a graph $G$ has a removable triple $(S,M_2,M_1)$ with $\rho(S,M_2,M_1) \le \alpha$, then $\gamma_R'(G) \le \gamma_R'(G-S) + \alpha |S|$.
\end{lemma}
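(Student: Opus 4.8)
The plan is to build an edge Roman dominating function of $G$ by combining an optimal edge Roman dominating function of $G-S$ with the data encoded in the removable triple. First I would take an edge Roman dominating function $g$ of $G-S$ with $w(g) = \gamma_R'(G-S)$. I then define $f\colon E(G) \to \{0,1,2\}$ by setting $f(e) = 2$ for $e \in M_2$, $f(e) = 1$ for $e \in M_1$, $f(e) = g(e)$ for $e \in E(G-S)$, and $f(e) = 0$ for every remaining edge of $G$ (these are precisely the edges incident to $S$ that lie in neither $M_2$ nor $M_1$). Note $M_2$ and $M_1$ are matchings in $G[S]$, hence subsets of $E(G)$, and they are disjoint, so $f$ is well defined; also $E(G-S)$ is disjoint from the edges incident to $S$, so there is no conflict there either.

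Next I would verify that $f$ is an edge Roman dominating function. Let $e$ be an edge with $f(e) = 0$. There are two cases. If $e$ is an edge of $G-S$, then $g(e) = 0$, so by the choice of $g$ there is an edge $e' \in E(G-S)$ adjacent to $e$ with $g(e') = 2$; since $e' \notin M_2 \cup M_1$ we have $f(e') = g(e') = 2$, as required. If instead $e$ is incident to a vertex of $S$, then $e \notin M_1$ (as $f(e)=0 \neq 1$) and $e$ is incident to a vertex in $S$, so by the definition of a removable triple $e$ is adjacent to some edge in $M_2$, which has $f$-value $2$. In both cases $e$ has an adjacent edge of weight $2$, so $f$ is an edge Roman dominating function of $G$.

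Finally I would bound the weight. We have $w(f) = 2|M_2| + |M_1| + w(g) = 2|M_2| + |M_1| + \gamma_R'(G-S)$. Since $\rho(S,M_2,M_1) = \frac{2|M_2| + |M_1|}{|S|} \le \alpha$, we get $2|M_2| + |M_1| \le \alpha |S|$, hence $\gamma_R'(G) \le w(f) \le \gamma_R'(G-S) + \alpha |S|$, which is the claimed inequality. I do not anticipate a real obstacle here: the only points needing care are the bookkeeping that the various edge sets over which $f$ is defined are genuinely disjoint (so $f$ is a function) and the correct reading of the defining property of a removable triple in the second case of the domination check — in particular that the hypothesis only controls edges \emph{not} in $M_1$, which is exactly why the case $f(e)=0$ (forcing $e \notin M_1$) is the one we need.
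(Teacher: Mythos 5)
Your proposal is correct and follows essentially the same argument as the paper: extend an optimal edge Roman dominating function of $G-S$ by assigning $2$ on $M_2$, $1$ on $M_1$, and $0$ elsewhere, then check domination in the two cases and bound the added weight by $2|M_2|+|M_1| \le \alpha|S|$. No gaps.
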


\begin{proof}
Let $G' = G-S$ and let $f'$ be an edge Roman dominating function of $G'$ with the minimum weight.
Define a function $f\colon E(G) \to \{0,1,2\}$ by setting
$$
    f(e) = \left\{\begin{array}{ll}
         f'(e), & \mbox{ if } e\in E(G'); \\
         2,     & \mbox{ if } e\in M_2; \\
         1,     & \mbox{ if } e\in M_1; \\
         0,     & \mbox{ otherwise.}
                \end{array} \right.
$$
Suppose $e$ is an edge with $f(e)=0$.
If $e \in E(G')$, then $e$ is adjacent to an edge $e' \in E(G')$ with $f(e')=f'(e')=2$.
If $e \not\in E(G')$, then $e$ is incident to some vertex in $S$ and so by the definition of a removable triple $e$ is adjacent to some edge $e' \in M_2$ with $f(e')=2$.
Hence, $f$ is an edge Roman dominating function of $G$ and so $\gamma'_R(G) \le \gamma'_R(G') + 2 |M_2| + |M_1| \le \gamma'_R(G-S) + \alpha |S|$.
\end{proof}


\begin{lemma} \label{ratio}
  For every removable triple $(S,M_2,M_1)$ of $G$, if $\gamma_R'(G-S) \le \alpha |V(G-S)|$ but $\gamma_R'(G) > \alpha |V(G)|$, then $\rho(S,M_2,M_1) > \alpha$
\end{lemma}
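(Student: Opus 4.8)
The plan is to prove the statement by contraposition: assuming $\rho(S,M_2,M_1) \le \alpha$, I will derive $\gamma_R'(G) \le \alpha\lvert V(G)\rvert$, which directly contradicts the hypothesis $\gamma_R'(G) > \alpha\lvert V(G)\rvert$ and hence forces $\rho(S,M_2,M_1) > \alpha$.

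First I would apply Lemma \ref{remove} to the removable triple $(S,M_2,M_1)$ with this value of $\alpha$; since we are assuming $\rho(S,M_2,M_1) \le \alpha$, the hypothesis of that lemma is met and it yields $\gamma_R'(G) \le \gamma_R'(G-S) + \alpha\lvert S\rvert$. Next I would feed in the standing hypothesis $\gamma_R'(G-S) \le \alpha\lvert V(G-S)\rvert$, obtaining $\gamma_R'(G) \le \alpha\lvert V(G-S)\rvert + \alpha\lvert S\rvert = \alpha\bigl(\lvert V(G-S)\rvert + \lvert S\rvert\bigr)$. Finally, since $S \subseteq V(G)$ we have the trivial vertex-count identity $\lvert V(G-S)\rvert + \lvert S\rvert = \lvert V(G)\rvert$, so $\gamma_R'(G) \le \alpha\lvert V(G)\rvert$, the promised contradiction.

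There is essentially no obstacle here: the lemma is just the contrapositive packaging of Lemma \ref{remove} combined with the identity $\lvert V(G)\rvert = \lvert V(G-S)\rvert + \lvert S\rvert$. The only point worth a moment's care is that passing from $\gamma_R'(G-S) \le \alpha\lvert V(G-S)\rvert$ to $\gamma_R'(G-S) + \alpha\lvert S\rvert \le \alpha\lvert V(G-S)\rvert + \alpha\lvert S\rvert$ is legitimate because one adds the same real number $\alpha\lvert S\rvert$ to both sides, so no sign hypothesis on $\alpha$ is needed; in all later applications $\alpha$ is in any case a positive rational. The purpose of stating this lemma separately is purely for convenience in the later discharging/reduction arguments, where one repeatedly wants to say ``if the reduced graph satisfies the target bound but the original does not, then the piece we removed must be expensive (high ratio).''
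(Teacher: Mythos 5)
Your proposal is correct and matches the paper's own proof: the paper also assumes $\rho(S,M_2,M_1)\le\alpha$, applies Lemma \ref{remove}, and uses $\gamma_R'(G-S)\le\alpha\lvert V(G-S)\rvert$ together with $\lvert V(G-S)\rvert+\lvert S\rvert=\lvert V(G)\rvert$ to contradict $\gamma_R'(G)>\alpha\lvert V(G)\rvert$. No differences worth noting.
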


\begin{proof}
Suppose to the contrary that $\rho(S,M_2,M_1) \le \alpha$ for some removable triple $(S,M_2,M_1)$ of $G$.
By Lemma \ref{remove}, $\gamma_R'(G) \le \gamma_R'(G-S) + \alpha |S| \le \alpha |V(G-S)| + \alpha |S| = \alpha |V(G)|$, a contradiction to the assumption that $\gamma_R'(G) > \alpha |V(G)|$.
\end{proof}


\begin{lemma}\label{mad}
  If $v$ is a vertex of degree $d$ in a graph $G$ and $M$ is a matching in $G[N(v)]$, then $G$ has a removable triple $(S,M_2,M_1)$ with $|S| \le 2d+1$ and
  $$
     \rho(S,M_2,M_1) \le \frac{2d-2|M|}{2d+1-2|M|} \le \frac{2d}{2d+1}.
  $$
\end{lemma}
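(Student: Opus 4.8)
The plan is to localise everything around $v$ and its neighbourhood. First I would reduce to the case that $M$ is a \emph{maximal} matching of $G[N(v)]$: the target $\frac{2d-2|M|}{2d+1-2|M|}$ equals $\frac{t}{t+1}$ with $t=2(d-|M|)$, and $t\mapsto\frac{t}{t+1}$ is increasing, so replacing $M$ by any matching $M'\supseteq M$ of $G[N(v)]$ only lowers the bound while leaving the constraint $|S|\le 2d+1$ untouched; the same monotonicity (with $2(d-|M|)\le 2d$) also gives the stated inequality $\frac{2d-2|M|}{2d+1-2|M|}\le\frac{2d}{2d+1}$. So assume $M$ is maximal; write $m=|M|$, $k=d-m$, and let $w_1,\dots,w_p$ (with $p=d-2m$) be the neighbours of $v$ not saturated by $M$ — an independent set of $G$, by maximality. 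If $p=0$ then $N(v)=V(M)$ and $(N[v],M,\emptyset)$ is a removable triple, since every edge meeting $N[v]$ either lies in $M$ or is adjacent to an $M$-edge at a vertex of $N(v)$; its ratio is $\frac{2m}{d+1}=\frac{d}{d+1}=\frac{2k}{2k+1}$ and $|N[v]|=d+1\le 2d+1$.

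For $p\ge 1$ I would run a greedy step: starting from $M_2:=M$, scan $w_1,\dots,w_p$ and, whenever the current $w_j$ has a neighbour $z\notin N[v]$ not yet saturated by $M_2$, add $w_jz$ to $M_2$ (each partner $z$ so chosen is genuinely fresh). Let $U$ be the set of $w_j$ still unsaturated at the end; the point is that for $w_j\in U$ the only edge at $w_j$ that can fail to be dominated by $M_2$ is $vw_j$ — its edges into $V(M)$ are dominated by $M$, and all of its outside neighbours are by then saturated. Now three cases. (i) If $U=\emptyset$, take $(V(M_2)\cup\{v\},M_2,\emptyset)$: here $v\notin V(M_2)$, $N(v)\subseteq V(M_2)$, $|M_2|=k$, $|S|=2k+1\le 2d+1$, ratio exactly $\frac{2k}{2k+1}$. (ii) If $|U|\ge 2$, pick $w_{j_0}\in U$, add $vw_{j_0}$ to $M_2$, and take $(V(M_2)\cup(U\setminus\{w_{j_0}\}),M_2,\emptyset)$: the set $U\setminus\{w_{j_0}\}$ is independent and, since now $v\in V(M_2)$, all its neighbours lie in $V(M_2)$, so this is a removable triple; with $s:=|U|-1\ge 1$ one gets $|M_2|=k-s$ and $|S|=2k-s\le 2d+1$, so the ratio is $\frac{2(k-s)}{2k-s}\le\frac{2k}{2k+1}$ (equivalent to $\frac{k}{k+1}\le s$). (iii) If $U=\{w_{j_0}\}$, instead put $vw_{j_0}$ into $M_1$ and take $(V(M_2)\cup\{v,w_{j_0}\},M_2,\{vw_{j_0}\})$: now $|M_2|=k-1$, $|M_1|=1$, $|S|=2k\le 2d+1$, and weight $2k-1$ gives ratio $\frac{2k-1}{2k}\le\frac{2k}{2k+1}$.

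In each case the removable-triple condition should be checked directly from the descriptions above (edges at $v$; edges at saturated neighbours; edges at the leftover vertices that got absorbed into $S$; and the observation that no edge leaves $S$), and one notes that $v\in S$ throughout. The step I expect to be the crux is case (iii): the obvious move, matching $v$ to the single leftover neighbour, forces $S=V(M_2)$ and the worthless ratio $1$, so one must instead pay a single unit of weight in $M_1$ rather than two units in $M_2$ — which is just enough to beat $\frac{2k}{2k+1}$. The remaining points — well-definedness of the greedy step, the membership bookkeeping, and the elementary arithmetic behind the three ratio inequalities — are routine.
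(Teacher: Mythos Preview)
Your proof is correct and follows essentially the same approach as the paper's: reduce to a maximal $M$, match the $M$-unsaturated neighbours of $v$ to vertices outside $N[v]$, and split into three cases according to how many of them remain unmatched (none, exactly one, at least two), handling each case exactly as you do. The only cosmetic difference is that the paper takes a \emph{maximum} matching in the bipartite graph between $X=N(v)\setminus V(M)$ and $Y=N(X)\setminus N[v]$ where you run a greedy pass; either suffices, since the only property used is that no unmatched $w_j$ has an unmatched outside neighbour.
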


\begin{proof}
Observe that $\frac{2d-2|M|}{2d+1-2|M|}$ decreases when $|M|$ increases.
By adding edges into $M$, we may without loss of generality assume that $M$ is a maximal matching in $G[N(v)]$.

Let $X=N(v)-V(M)$ and $Y=N(X)-N[v]$.
Since $M$ is a maximal matching in $G[N(v)]$, $X$ is stable in $G[N(v)]$.
We define $B=G[X \cup Y] - (E(G[X]) \cup E(G[Y]))$ and $M'$ to be a maximum matching of $B$.
Let $X' = X \cap V(M')$ and $X''=X-X'$; let $Y' = Y \cap V(M')$ and $Y''=Y-Y'$.
Notice that there are no edges between $X''$ and $Y''$, while possibly there are edges between $X'$ and $Y''$ and  edges between $X''$ and $Y'$.
See Figure \ref{fig2}.

Let $M_2 = M \cup M'$ and $S=N[v] \cup V(M_2)$.
If $\lvert X'' \rvert=0$, then $(S,M_2,\emptyset)$ is a removable triple with ratio $\frac{2|M_2|}{|S|} = \frac{2d-2|M|}{2d+1-2|M|}$.
If $\lvert X'' \rvert=1$, say $X'' = \{w\}$, then $(S,M_2,\{vw\})$ is a removable triple with ratio $\frac{2|M_2|+1}{|S|} = \frac{2d-1-2|M|}{2d-2|M|} < \frac{2d-2|M|}{2d+1-2|M|}$.
If $\lvert X'' \rvert \ge 2$, then for every $w \in X''$, $(S,M_2 \cup \{vw\},\emptyset)$ is a removable triple with ratio $\frac{2|M_2|+2}{|S|} = \frac{2d-2|M|-2|X''|+2}{2d-2|M|-|X''|+1} < \frac{2d-2|M|}{2d+1-2|M|}$.
\end{proof}


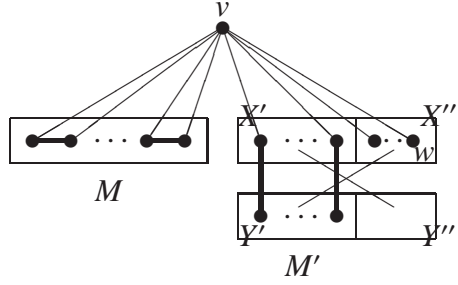
\begin{figure}[htb]
\setlength{\unitlength}{0.1cm}
\begin{center}
\begin{picture}(50,22)(0,-10)
 \put(00,00){\circle*{\ps}} \put(00,00){\line(1,0){5}} \put(00,0.2){\line(1,0){5}} \put(00,-0.2){\line(1,0){5}}
 \put(05,00){\circle*{\ps}} \put(08,-8){$M$}
 \put(08,-0.2){$\ldots$}
 \put(15,00){\circle*{\ps}} \put(15,00){\line(1,0){5}} \put(15,0.2){\line(1,0){5}} \put(15,-0.2){\line(1,0){5}}
 \put(20,00){\circle*{\ps}}
 \put(-3,-3){\line(1,0){26}} \put(-3,-3){\line(0,1){6}} \put(23,03){\line(-1,0){26}} \put(23,03){\line(0,-1){6}}
 \put(30,00){\circle*{\ps}} \put(30,00){\line(0,-1){10}} \put(30.2,00){\line(0,-1){10}} \put(29.8,00){\line(0,-1){10}} \put(30,-10){\circle*{\ps}}
 \put(33,-0.2){$\ldots$} \put(33,-10.2){$\ldots$}
 \put(40,00){\circle*{\ps}} \put(40,00){\line(0,-1){10}} \put(40.2,00){\line(0,-1){10}} \put(39.8,00){\line(0,-1){10}} \put(40,-10){\circle*{\ps}}
 \put(45,00){\circle*{\ps}}
 \put(46,-0.2){$\ldots$}
 \put(50,00){\circle*{\ps}} \put(50,-2.7){$w$}
 \put(27,-3){\line(01,0){26}} \put(27,-3){\line(0,1){6}}  \put(27,02){$X'$} \put(42.5,-3){\line(0,1){6}}
 \put(53,03){\line(-1,0){26}} \put(53,03){\line(0,-1){6}} \put(51,02){$X''$}
 \put(27,-13){\line(1,0){26}} \put(27,-13){\line(0,1){6}} \put(27,-14){$Y'$} \put(42.5,-13){\line(0,1){6}}
 \put(53,-7){\line(-1,0){26}} \put(53,-7){\line(0,-1){6}} \put(51,-14){$Y''$} \put(33,-18){$M'$}
 \put(25,15){\circle*{\ps}} \put(24,16.5){$v$}
                            \put(25,15){\line(-5,-3){25}} \put(25,15){\line(-4,-3){20}}
                            \put(25,15){\line(-2,-3){10}} \put(25,15){\line(-1,-3){05}}
                            \put(25,15){\line(01,-3){05}} \put(25,15){\line(01,-1){15}}
                            \put(25,15){\line(04,-3){20}} \put(25,15){\line(05,-3){25}}
 \put(35,-1.25){\line(05,-3){12.5}}
 \put(35,-8.75){\line(05,03){12.5}}
\end{picture}
\end{center}
\caption{The vertex $w$ exists only when $X'' \ne \emptyset$.}
\label{fig2}
\end{figure}


\begin{theorem} \label{k degenerate}
  If $G$ is a $k$-degenerate graph of $n$ vertices, then $\gamma'_{R}(G)\leq \frac{2k}{2k+1}n.$
\end{theorem}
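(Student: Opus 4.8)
The plan is to prove the statement by induction on $n=\lvert V(G)\rvert$, with $\alpha:=\frac{2k}{2k+1}$ fixed throughout. The base case covers all graphs with no edges (in particular $n=0$): then every edge Roman dominating function has weight $0$, so $\gamma'_R(G)=0\le \alpha n$. For the inductive step, assume $G$ has at least one edge, so in particular $V(G)\ne\emptyset$; since $G$ is $k$-degenerate, it contains a vertex $v$ with $\deg_G(v)=d\le k$.

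Next I would invoke Lemma~\ref{mad} applied to $v$ with the empty matching $M=\emptyset$. This yields a removable triple $(S,M_2,M_1)$ of $G$ with $\emptyset\ne S$, $\lvert S\rvert\le 2d+1$, and
$$
  \rho(S,M_2,M_1)\;\le\;\frac{2d-0}{2d+1-0}\;=\;\frac{2d}{2d+1}\;\le\;\frac{2k}{2k+1}\;=\;\alpha ,
$$
where the last inequality uses that the map $t\mapsto \frac{2t}{2t+1}$ is increasing on nonnegative reals together with $d\le k$. In particular $1\le\lvert S\rvert$.

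Now observe that $G-S$ is a subgraph of $G$, hence is itself $k$-degenerate, and it has $n-\lvert S\rvert<n$ vertices; by the induction hypothesis $\gamma'_R(G-S)\le\alpha\bigl(n-\lvert S\rvert\bigr)$. Since $\rho(S,M_2,M_1)\le\alpha$, Lemma~\ref{remove} applies and gives
$$
  \gamma'_R(G)\;\le\;\gamma'_R(G-S)+\alpha\lvert S\rvert\;\le\;\alpha\bigl(n-\lvert S\rvert\bigr)+\alpha\lvert S\rvert\;=\;\alpha n ,
$$
which completes the induction. (One could equivalently run this non-inductively by repeatedly peeling off such removable triples along a degeneracy ordering and summing, but induction on $n$ is the cleanest formulation.)

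All the real work has already been done in Lemma~\ref{mad}; the present theorem is essentially a clean repackaging of that lemma via the degeneracy hypothesis, so I do not anticipate a genuine obstacle. The only two points that need a moment of care are: (i) checking that $G-S$ inherits $k$-degeneracy, which is immediate because every subgraph of $G$ (hence of $G-S$) has a vertex of degree at most $k$; and (ii) confirming that the ratio bound $\frac{2d}{2d+1}$ produced by a low-degree vertex does not exceed the target coefficient $\alpha$, which is exactly the monotonicity remark above. The fact that $\lvert S\rvert\ge 1$ (because $v\in S$) is what makes the induction actually decrease $n$.
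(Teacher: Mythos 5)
Your proof is correct and follows essentially the same route as the paper: apply Lemma~\ref{mad} to a vertex of degree $d\le k$ (guaranteed by $k$-degeneracy) to get a removable triple of ratio at most $\frac{2k}{2k+1}$, and combine with the removal lemma on the $k$-degenerate graph $G-S$. The only difference is presentational — you run a direct induction via Lemma~\ref{remove}, while the paper takes a minimum counterexample and invokes Lemma~\ref{ratio}, which is the same argument in contrapositive form.
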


\begin{proof}
The theorem clearly holds when $n=1$.
Suppose $G$ is a minimum counterexample to the theorem.
That is, $G$ is a $k$-degenerate graph $G$ with $\gamma_R'(G) > \frac{2k}{2k+1} |V(G)|$, but $\gamma_R'(H) \le \frac{2k}{2k+1} |V(H)|$ for every proper subgraph $H$ of $G$, which is also $k$-degenerate.
Since $G$ has a vertex of degree $d \le k$, Lemma \ref{mad} implies the existence of a removable triple of $G$ with ratio at most $\frac{2d}{2d+1} \le \frac{2k}{2k+1}$.
It is a contradiction to Lemma \ref{ratio}.
\end{proof}


We remark that every tree is $1$-degenerate, so the upper bound $\lfloor \frac{2n}{3} \rfloor$ for a tree of $n$ vertices \cite{Akbari1} is also a consequence of Theorem \ref{k degenerate}.
In addition, the result in \cite{Akbari1} on graphs of maximum degree $\Delta$ is a consequence of Theorem \ref{k degenerate}, since a graph of maximum degree $\Delta$ is $\Delta$-degenerate.
The objective of the rest of this section is to improve this bound in terms of the maximum degree for connected graphs.

\begin{lemma} \label{non-regular}
  Let $G$ be a graph of maximum degree $\Delta$ of $n$ vertices.
  If every component of $G$ contains a vertex of degree less than $\Delta$, then $\gamma'_R(G) \leq \frac{2\Delta-2}{2\Delta-1}n$.
\end{lemma}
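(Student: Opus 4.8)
The plan is to run a minimum‑counterexample argument driven by Lemma~\ref{remove}. Write $\alpha=\frac{2\Delta-2}{2\Delta-1}$ and suppose, for contradiction, that $G$ is a graph satisfying the hypothesis with $\gamma'_R(G)>\alpha\lvert V(G)\rvert$ and with $\lvert V(G)\rvert$ as small as possible. Since $\gamma'_R$ is additive over connected components and every component of $G$ again satisfies the hypothesis (with maximum degree at most $\Delta$) while having strictly fewer vertices, minimality forces $G$ to be connected. In particular $G$ has at least two vertices, so every vertex of $G$ has degree at least $1$, and by hypothesis $G$ has a vertex $v$ of degree $d$ with $1\le d\le\Delta-1$ (in particular $\Delta\ge 2$).

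First I would apply Lemma~\ref{mad} to $v$ with $M=\emptyset$, obtaining a removable triple $(S,M_2,M_1)$ with $v\in S$ (so $S\neq\emptyset$) and $\rho(S,M_2,M_1)\le\frac{2d}{2d+1}\le\frac{2(\Delta-1)}{2(\Delta-1)+1}=\alpha$, the last inequality because $x\mapsto\frac{2x}{2x+1}$ is increasing. If $S=V(G)$, then Lemma~\ref{remove} applied with the empty graph $G-S$ already gives $\gamma'_R(G)\le\alpha\lvert V(G)\rvert$, contradicting the choice of $G$. Hence $S$ is a nonempty proper subset of $V(G)$.

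The crux is to check that $G-S$ again satisfies the hypotheses. Its maximum degree is at most $\Delta$ and it has fewer vertices than $G$, so (reading the statement as one about graphs of maximum degree at most $\Delta$, which changes nothing) I only need every component of $G-S$ to contain a vertex of degree less than $\Delta$. Let $C$ be a component of $G-S$. Since $G$ is connected, a path in $G$ from a vertex of $C$ to a vertex of $S$ leaves $C$, and because $C$ is a component of $G-S$ the first vertex of this path outside $C$ must lie in $S$; hence there is an edge of $G$ with one end in $C$ and the other in $S$. Its end in $C$ then has degree at most $\Delta-1$ in $G-S$, as required. By minimality of $G$ we obtain $\gamma'_R(G-S)\le\alpha\lvert V(G-S)\rvert$, and Lemma~\ref{remove} yields $\gamma'_R(G)\le\gamma'_R(G-S)+\alpha\lvert S\rvert\le\alpha\lvert V(G-S)\rvert+\alpha\lvert S\rvert=\alpha\lvert V(G)\rvert$, the desired contradiction.

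I expect the only genuinely non‑routine step to be the crux above: showing that deleting $S$ cannot create a $\Delta$‑regular component. This is precisely where connectedness of $G$ enters, and it is exactly the obstruction that forces the lemma to be stated for graphs all of whose components are non‑regular rather than for arbitrary graphs; everything else is bookkeeping built on Lemmas~\ref{remove} and~\ref{mad} plus the elementary monotonicity of $\frac{2x}{2x+1}$.
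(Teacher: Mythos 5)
Your proof is correct and follows essentially the same route as the paper's: a minimum counterexample, Lemma~\ref{mad} (with $M=\emptyset$) applied to a vertex of degree less than $\Delta$ to produce a removable triple of ratio at most $\frac{2\Delta-2}{2\Delta-1}$, and Lemmas~\ref{remove}/\ref{ratio} after checking that $G-S$ inherits the hypothesis --- a verification (and the ``maximum degree at most $\Delta$'' reading) that the paper asserts implicitly and you spell out. The only cosmetic slip is that connectedness alone does not give $\lvert V(G)\rvert\ge 2$; dismiss the one-vertex graph directly (it has $\gamma'_R=0$ and so is no counterexample), after which everything stands.
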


\begin{proof}
Suppose that $G$ is a minimum counterexample to the lemma.
Since every component of $G-S$ contains a vertex of degree less than $\Delta$ for every $S \subseteq V(G)$, by Lemma \ref{ratio}, every removable triple of $G$ has ratio greater than $\frac{2\Delta-2}{2\Delta-1}$.
However, $G$ contains a vertex of degree less than $\Delta$.
So by Lemma \ref{mad}, there exists removable triple with ratio at most $\frac{2(\Delta-1)}{2(\Delta-1)+1}$, a contradiction.
\end{proof}

\begin{theorem}
  If $G$ is a connected graph of maximum degree $\Delta$ on $n$ vertices, then $\gamma'_R(G) \leq \frac{2\Delta-2}{2\Delta-1}n + \frac{2}{2\Delta-1}$.
\end{theorem}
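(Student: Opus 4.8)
The plan is to split into two cases according to whether $G$ is $\Delta$-regular. If $G$ is not $\Delta$-regular then, being connected, it consists of a single component, and that component contains a vertex of degree less than $\Delta$; hence Lemma \ref{non-regular} already gives the stronger bound $\gamma'_R(G)\le\frac{2\Delta-2}{2\Delta-1}n\le\frac{2\Delta-2}{2\Delta-1}n+\frac{2}{2\Delta-1}$. The real content is the $\Delta$-regular case, in which Lemma \ref{non-regular} cannot be applied to $G$ itself; the idea is to peel off a small removable triple around one vertex and apply Lemma \ref{non-regular} to what remains.

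Concretely, I would pick an arbitrary vertex $v$ of $G$ and invoke Lemma \ref{mad} (say, with the empty matching in $G[N(v)]$) to obtain a removable triple $(S,M_2,M_1)$ with $v\in S$, $|S|\le 2\Delta+1$, and $\rho(S,M_2,M_1)\le\frac{2\Delta}{2\Delta+1}$. The crucial observation is that $G-S$ satisfies the hypothesis of Lemma \ref{non-regular}: since $G$ is connected and $S\ne\emptyset$, every component $C$ of $G-S$ contains an endpoint, lying in $C$, of some edge of $G$ joining $C$ to $S$, and that vertex has degree at most $\Delta-1$ in $G-S$. Therefore Lemma \ref{non-regular} gives $\gamma'_R(G-S)\le\frac{2\Delta-2}{2\Delta-1}\lvert V(G-S)\rvert=\frac{2\Delta-2}{2\Delta-1}(n-|S|)$, and substituting into Lemma \ref{remove} with $\alpha=\frac{2\Delta}{2\Delta+1}$ yields
\begin{align*}
  \gamma'_R(G) &\le \frac{2\Delta-2}{2\Delta-1}(n-|S|)+\frac{2\Delta}{2\Delta+1}|S| \\
  &= \frac{2\Delta-2}{2\Delta-1}\,n+\Bigl(\frac{2\Delta}{2\Delta+1}-\frac{2\Delta-2}{2\Delta-1}\Bigr)|S|.
\end{align*}
Since $\frac{2\Delta}{2\Delta+1}-\frac{2\Delta-2}{2\Delta-1}=\frac{2}{(2\Delta+1)(2\Delta-1)}>0$ and $|S|\le 2\Delta+1$, the last term is at most $\frac{2}{2\Delta-1}$, which is exactly the asserted bound.

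The one step requiring genuine care is the verification that $G-S$ meets the hypothesis of Lemma \ref{non-regular}; this is precisely where connectivity of $G$ enters, and it is essential — for a disjoint union of many copies of $C_5$, for instance, the additive constant would have to be replaced by a term linear in the number of components. Everything else is just the short estimate above, which succeeds only because $\frac{2\Delta}{2\Delta+1}$ and $\frac{2\Delta-2}{2\Delta-1}$ differ by a mere $\Theta(\Delta^{-2})$, so multiplying their gap by $|S|=O(\Delta)$ costs only $O(\Delta^{-1})$. For completeness one would also note that when $\Delta(G-S)<\Delta$ the same bound on $\gamma'_R(G-S)$ follows instead from Theorem \ref{k degenerate} (since $G-S$ is then $(\Delta-1)$-degenerate), and that the degenerate small cases $n=1$ and $\Delta\le 1$ are immediate.
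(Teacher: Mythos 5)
Your proposal is correct and follows essentially the same route as the paper: peel off the removable triple from Lemma \ref{mad} with $|S|\le 2\Delta+1$ and ratio at most $\frac{2\Delta}{2\Delta+1}$, use connectivity to see that every component of $G-S$ has a vertex of degree less than $\Delta$, apply Lemma \ref{non-regular} to $G-S$, and combine via Lemma \ref{remove}, the gap $\frac{2}{(2\Delta-1)(2\Delta+1)}|S|\le\frac{2}{2\Delta-1}$ giving the additive constant. Your extra case split on regularity and the remark covering $\Delta(G-S)<\Delta$ via Theorem \ref{k degenerate} are harmless refinements of the same argument.
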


\begin{proof}
According to Lemma \ref{mad}, $G$ has a removable triple $(S,M_2,M_1)$ with $|S| \le 2\Delta+1$ and $\rho(S,M_2,M_1) \le \frac{2\Delta}{2\Delta+1}$.
Since $G$ is connected, every component of $G-S$ contains a vertex of degree less than $\Delta$.
Therefore, by Lemma \ref{non-regular}, $\gamma'_R(G-S) \le \frac{2\Delta-2}{2\Delta-1} (n-|S|)$.
Then, by Lemma \ref{ratio}, $\gamma'_R(G) \leq \frac{2\Delta-2}{2\Delta-1} (n-|S|) + \frac{2\Delta}{2\Delta+1} |S| = \frac{2\Delta-2}{2\Delta-1}n + \frac{2}{4\Delta^2-1} |S| \leq \frac{2\Delta-2}{2\Delta-1}n + \frac{2}{2\Delta-1}$.
\end{proof}

The requirement for the connectivity of $G$ is necessary.
Consider the graph $t K_{\Delta,\Delta}$ of maximum degree $\Delta$ on $n=2\Delta t$ vertices.
By Theorem \ref{bipartite}, $\gamma_R'(t K_{\Delta,\Delta})=(2\Delta-1)t = \frac{2\Delta-1}{2\Delta}n
> \frac{2\Delta-2}{2\Delta-1}n + \frac{2}{2\Delta-1}$  when $t$ is large.

\section{Subcubic graphs}

Recall that Akbari \cite{Akbari1} showed that $\gamma'_R(G)\leq \frac{6}{7}n$ for every subcubic graph $G$ of $n$ vertices.
The main theorem of this section shows that $K_{3,3}$ is the only connected graph attaining this bound.
Note that Theorem \ref{subcubic} is tight as $\gamma'_R(G_{2,t}) = \frac{4}{5}n$ for every positive integer $t$, by Theorem \ref{Grt}.


\begin{theorem} \label{subcubic}
If $G$ is a subcubic graph of $n$ vertices contains no $K_{3,3}$ as a component, then $\gamma'_{R}(G)\leq \frac{4}{5}n$.
\end{theorem}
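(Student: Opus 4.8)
The plan is to argue by contradiction via a minimum counterexample, reduce to a highly structured graph, and then exhibit a removable triple of ratio at most $\frac{4}{5}$, contradicting Lemma~\ref{ratio}.

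First I would fix a counterexample $G$ with $|V(G)|$ minimum, so that $G$ is subcubic, has no $K_{3,3}$ component, and $\gamma'_R(G)>\frac{4}{5}|V(G)|$. Since $\gamma'_R(\cdot)$ and $|V(\cdot)|$ are additive over components and no component of $G$ is $K_{3,3}$, a disconnected counterexample would contain a smaller counterexample as a component; hence $G$ is connected and $G\ne K_{3,3}$. (That $K_{3,3}$ must be excluded at all is shown by Theorem~\ref{bipartite}: $\gamma'_R(K_{3,3})=5>\frac{4}{5}\cdot 6$.) Next I would note that for every nonempty $S\subseteq V(G)$ the graph $G-S$ has no $K_{3,3}$ component: such a component $H$ would already have $3$ edges at each vertex, so its vertices have no neighbour in $S$ and none outside $H$, making $H$ a component of the connected graph $G$ and forcing $G=K_{3,3}$. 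Thus $G-S$ is a legitimate smaller instance, and by minimality $\gamma'_R(G-S)\le\frac{4}{5}|V(G-S)|$ whenever $S\ne\emptyset$. Now if $G$ had a vertex $v$ of degree $d\le 2$, then Lemma~\ref{mad} with $M=\emptyset$ would give a removable triple of ratio at most $\frac{2d}{2d+1}\le\frac{4}{5}$; and if two neighbours of some vertex were adjacent, then Lemma~\ref{mad} with $M$ that single edge would give ratio at most $\frac{2\cdot 3-2}{2\cdot 3+1-2}=\frac{4}{5}$. In either case Lemma~\ref{ratio} is contradicted, so $G$ is connected, cubic and triangle-free with $G\ne K_{3,3}$.

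The remaining and main task is to show that such a $G$ contains a removable triple $(S,M_2,M_1)$ with $\rho(S,M_2,M_1)\le\frac{4}{5}$, i.e.\ with $10|M_2|+5|M_1|\le 4|S|$; by the previous step this contradicts Lemma~\ref{ratio}. The structural point is that, by the definition of a removable triple, every vertex of $S$ having a neighbour outside $S$ must be covered by $M_2$ (its exterior edge lies in neither $M_1$ nor $M_2$ and can be adjacent to a matching edge of $G[S]$ only through that vertex); so $S$ must be chosen ``closed enough'' that its boundary has size at most $\frac{4}{5}|S|$ and, crucially, so that $M_2$ can actually be realised as a matching of $G[S]$ covering the whole boundary, with $M_1$ a small matching of $G[S]$ mopping up the few undominated edges still inside $G[S]$. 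Concretely I would root the analysis either at a shortest cycle (a $4$- or $5$-cycle) or at a vertex $v$ with neighbours $x_1,x_2,x_3$ together with its second neighbourhood $N_2=N(\{x_1,x_2,x_3\})\setminus\{v\}$, $|N_2|\le 6$, and split into cases according to the number of coincidences among the six edges from $\{x_1,x_2,x_3\}$ to $N_2$ (equivalently, the $4$-cycles through $v$) and the edges inside $N_2$. When $G$ has a $K_{2,3}$-subgraph --- some $v$ having two neighbours with identical third-neighbourhood --- one takes $S$ to be that $K_{2,3}$ together with its three exit vertices and obtains ratio at most $\frac{3}{4}$. Otherwise one takes $S$ to be the ball of radius $2$ about $v$ (at most $10$ vertices, with interior $\{v,x_1,x_2,x_3\}$ and boundary inside $N_2$) and tries $M_2$ a matching of $G[S]$ of size at most $4$ covering $N_2$ and one or two of the $x_i$, with $M_1$ empty or a single edge; when the radius-$2$ ball does not carry such a matching --- which is exactly when the relevant part of $N_2$ is independent --- one enlarges $S$ by adjoining a few third-neighbours, chosen so that the ratio stays at $\frac{4}{5}$ and, using connectedness, so that no $K_{3,3}$ is split off in $G-S$. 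Finally, the finitely many connected cubic triangle-free graphs on at most ten vertices other than $K_{3,3}$ (such as $Q_3$, for which $\gamma'_R(Q_3)=6\le\frac{4}{5}\cdot 8$, the Wagner graph, and the Petersen graph) are handled directly by exhibiting an edge Roman dominating function.

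The step I expect to be the main obstacle is this case analysis, and the difficulty is concentrated precisely on the configurations that make the constant $\frac{4}{5}$ tight, namely the $K_{2,3}$-like local structures: there the radius-$2$ ball can genuinely fail to carry a matching of the required size because the relevant neighbourhoods are independent, so one must both decide how far to grow $S$ and verify that the growth neither pushes the ratio above $\frac{4}{5}$ nor creates a $K_{3,3}$ component in $G-S$. Everything else --- the reductions in the first two steps, the favourable $K_{2,3}$ case, and the bookkeeping, which rests only on Lemmas~\ref{remove}, \ref{ratio} and~\ref{mad} --- should be routine.
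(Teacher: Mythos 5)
Your opening reductions are fine and agree with the paper: a minimum counterexample is connected, Lemma~\ref{ratio} forces every removable triple to have ratio greater than $\frac{4}{5}$, Lemma~\ref{mad} then makes $G$ cubic, and your use of Lemma~\ref{mad} with a single matching edge in $G[N(v)]$ to kill triangles is a legitimate (even slightly slicker) version of the paper's Claim~1. But the heart of the theorem --- actually exhibiting a removable triple of ratio at most $\frac{4}{5}$ in a connected cubic triangle-free graph other than $K_{3,3}$ --- is missing, and the route you sketch for it does not work. Your plan is local: a ball of radius $2$ around a vertex, enlarged by ``a few third-neighbours,'' with a case analysis you yourself flag as the unresolved obstacle. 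In a cubic graph of large girth (such graphs exist, e.g.\ cages, so your parenthetical that a shortest cycle is ``a $4$- or $5$-cycle'' is already false), the subgraph induced on any such bounded-radius set $S$ is a tree; as you correctly observe, every vertex of $S$ with an edge leaving $S$ must be covered by $M_2$, and in a tree the leaves are pairwise non-adjacent, so each leaf consumes its own $M_2$ edge to its parent while parents of two leaves make coverage outright impossible. No bounded local growth is shown (and none plausibly exists) that brings the ratio down to $\frac{4}{5}$; the tightness of $\frac{4}{5}$ is not only about $K_{2,3}$-like pieces, it is about the fact that tree-shaped $S$ is globally too expensive.

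The paper escapes this with a genuinely global ingredient that your proposal never touches: by a theorem of Chen and Saito \cite{Chen}, every cubic graph contains a cycle whose length is divisible by $3$, so one may choose a shortest cycle $C$ with $\lvert C\rvert \not\equiv 1 \pmod 3$. The minimality of $C$ then severely restricts its chords (this is where $K_{3,3}$ is excluded), forces the outside neighbours $u_0,u_3,\ldots$ of every third cycle vertex to be distinct and to form a stable set, and the removable triple is built along the cycle: $S=V(C)\cup U_3\cup W_3'$ with $M_2$ consisting of the edges $v_{3i+1}v_{3i+2}$ plus one pendant edge per $u_{3i}$, and $M_1$ picking up the leftover $v_{3i}u_{3i}$. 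The ratio comes out to exactly (at most) $\frac{4}{5}$ precisely because the residue condition $\lvert C\rvert\not\equiv 1\pmod 3$ lets the every-third-vertex pattern close up around the cycle; the cycle, not a ball, is what allows the boundary of $S$ to be covered cheaply. Without this idea (or a substitute for it), the main case of your argument has no proof, so the proposal has a genuine gap rather than being an alternative route.
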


\begin{proof}
Suppose $G$ is a minimum counterexample to the theorem.
Then $G$ is connected.
By Lemma \ref{ratio}, every removable triple has ratio greater than $\frac{4}{5}$.
By Lemma \ref{mad}, $G$ has no vertex of degree at most two, so $G$ is cubic.

\noindent
{\bf Claim 1.}
$G$ is triangle-free.

\noindent
{\it Proof.}
Suppose to the contrary that $G$ has a triangle $v_0v_1v_2$.
Let $u_0$ be the third neighbor of $v_0$.
If $u_0$ is adjacent to both $v_1$ and $v_2$, then $G$ is $K_4$ and cannot be a counterexample.
So $u_0$ has a neighbor $w_0$ other than $v_1$ and $v_2$.
Then $(\{v_0,v_1,v_2,u_0,w_0\}, \{v_1v_2, u_0w_0\}, \emptyset)$ is a removable triple of ratio $\frac{4}{5}$, a contradiction.
$\Box$

Now, choose a shortest cycle $C\colon v_0 v_1 v_2 \ldots v_{|C|-1}v_0$ of length $|C| \not\equiv 1$ (mod 3).
Note that the existence of such a cycle follows from Theorem 1 in \cite{Chen}, which implies that every cubic graph has a cycle of length a multiple of 3.
In the following, the indices for the vertices in $C$ are taken modulo $|C|$.
By Claim 1, $|C| \ge 5$.

\noindent
{\bf Claim 2.}
(i)  If $|C|\equiv 2$ (mod 3), then $C$ has no chord.
(ii) If $|C|\equiv 0$ (mod 3), then $C$ has at most two chords.
Any chord of $C$ is of the form $v_a v_{a+3b}$; and if $C$ has two chords, then they are $v_av_{a+3b}$ and $v_{a+1}v_{a+1+3b}$ for some integers $a$ and $b$.

\noindent
{\it Proof.}
Suppose $C$ has a chord $e$, whose end vertices divide $C$ into two paths $Q_1$ and $Q_2$.
Each path $Q_r$ together with $e$ form a cycle $D_r$ of length shorter than $|C|$.
By the choice of $C$, each $|D_r| \equiv 1$ (mod 3) and so each $|Q_r| \equiv 0$ (mod 3).
This is possible only when $|C| \equiv 0$ (mod 3), which gives (i) and that a chord is of the form $v_a v_{a+3b}$.

Suppose $C$ has two chords $v_i v_{i+3j}$ and $v_{i'} v_{i'+3j'}$, say $0 = i < i' \le |C|-1$.
As $G$ is cubic, these two chords are disjoint.
If these two chords are non-crossing, say $0 = i < i+3j < i' < i'+3j' \le |C|-1$, then $v_i, v_{i+3j}, v_{i'}, v_{i'+3j'}$ divide $C$ into four paths $R_1, R_2, R_3, R_4$, where $C=v_iR_1v_{i+3j}R_2v_{i'}R_3v_{i'+3j'}R_4v_i$.
Since $R_1 \cup v_iv_{i+3j}$, $R_2 \cup R_4 \cup v_iv_{i+3j} \cup v_{i'}v_{i'+3j'}$ and $R_3 \cup v_{i'}v_{i'+3j'}$ are cycles shorter than $C$, $|R_1| \equiv |R_2|+|R_4|+1 \equiv |R_3| \equiv 0$ (mod 3).
But then $R_2 \cup R_3 \cup R_4 \cup v_iv_{i+3j}$ is a cycle shorter than $C$ with length $0$ (mod $3$), contradicting the choice of $C$.
If these two chords are crossing, say $0 = i < i' < i+3j < i'+3j' \le |C|-1$, then $v_i, v_{i'}, v_{i+3j}, v_{i'+3j'}$ divide $C$ into four paths $S_1, S_2, S_3, S_4$, where $C = v_iS_1v_{i'}S_2v_{i+3j}S_3v_{i'+3j'}S_4v_i$.
If the two chords are not of the desired form, then $S_1 \cup v_{i'}v_{i'+3j'} \cup S_3 \cup v_iv_{i+3j}$ is a cycle shorter than $C$, so $\lvert S_1 \rvert + \lvert S_3 \rvert \equiv 2$ (mod $3$).
But $\lvert S_1 \rvert \equiv \lvert S_3 \rvert$ (mod $3$), so $\lvert S_1 \rvert \equiv \lvert S_3 \rvert \equiv 1$ (mod $3$).
Similarly, $\lvert S_2 \rvert \equiv \lvert S_4 \rvert \equiv 1$ (mod $3$), so $\lvert C \rvert \equiv 1$ (mod $3$), a contradiction.

Finally, if there are three chords for which each pair is of the form $v_av_{a+3b}$ and $v_{a+1}v_{a+1+3b}$, then it is the case that $|C|=6$ and the chords are $v_0v_3, v_1v_4,v_2v_5$.
This implies that $G$ is in fact $K_{3,3,}$, violating the assumption of the theorem.
$\Box$

By Claim 2, we may assume that either $C$ has no chord, or $|C| \equiv 0$ (mod 3) and $C$ has one chord $v_1 v_{3a+1}$ or two chords $v_1 v_{3a+1}, v_2 v_{3a+2}$.
For any $v_i$ that is not an end of a chord of $C$, it has a neighbor $u_i$ not in $C$.
In particular, $u_0, u_3, \ldots, u_{3r}$ exist, where $3r=|C|-3$ when $|C| \equiv 0$ (mod 3) and  $3r=|C|-2$ when $|C| \equiv 2$ (mod 3).
In the following, when $u_i$ is mentioned we always assume that it exists.

\noindent
{\bf Claim 3.}
If $i \ne j$ but $u_i = u_j$, then $\min\{|i-j|, |C|-|i-j|\} =2$.

\noindent
{\it Proof.}
Vertices $v_i$ and $v_j$ divide $C$ into two paths $R_1$ and $R_2$.
If $\min\{|i-j|, |C|-|i-j|\} > 2$, then for each $r=1,2$, $R_r$ together with the path $v_iu_iv_j$ form a cycle $D_r$ of length shorter than $|C|$.
By the choice of $C$, each $|D_r| \equiv 1$ (mod 3) and so each $|R_r| \equiv
2$ (mod 3).
These imply that $|C| \equiv 1$ (mod 3), a contradiction.
$\Box$

By Claim 3,  $u_0, u_3, \ldots, u_{3r}$ are distinct except possibly $u_0=u_{3r}$ when $|C| \equiv 2$ (mod 3).
If $u_0=u_{3r}$, then $\lvert C \rvert \equiv 2$ (mod $3$), so $C$ is chordless, and $u_0,u_{-3},u_{-6},...,u_{-3r}$ exist and are distinct.
So we may without loss of generality assume that all $u_0, u_3, \ldots, u_{3r}$ are distinct.
Let
$$
    V_3 = \{v_0, v_3, \ldots, v_{3r}\} { ~~ \rm and ~~ }
    U_3 = \{u_0, u_3, \ldots, u_{3r}\}.
$$

\noindent
{\bf Claim 4.}
The vertex set $U_3$ is stable.

\noindent
{\it Proof.}
Suppose to the contrary that $u_{3a}$ is adjacent to $u_{3b}$ for some $0 \le 3a < 3b \le 3r$.
Vertices $v_{3a}$ and $v_{3b}$ divide $C$ into two paths $Q_1,Q_2$ with $|Q_1| = 3b-3a$ and $|Q_2|=|C|-3b+3a$.
For each $r=1,2$, path $Q_r$ together with the path $v_{3a}u_{3a}u_{3b}v_{3b}$ form a cycle $D_r$ with $|D_r| \equiv$ 0 or 2 (mod 3).
By the choice of $C$, $|C| \le 3b-3a+3$ and $|C| \le |C|-3b+3a+3$.  Hence $|C| \le 6$. Consequently, $a=0$ and $b=1$.
If $|C|=6$, then $(V(C) \cup U_3, \{u_0u_3, v_1v_2, v_4v_5\}, \emptyset)$ is a removable triple of ratio $\frac{6}{8} < \frac{4}{5}$, a contradiction.
If $|C|=5$, then $u_4$ exists and is distinct from $u_0,u_3$ by Claim 1, so $(V(C) \cup U_3 \cup \{u_{4}\}, \{u_0u_3, v_1v_2, v_4u_4\}, \emptyset)$ is a removable triple of ratio $\frac{6}{8} < \frac{4}{5}$, a contradiction.
$\Box$

Now, choose a maximal subset $U_3'$ of $U_3$ such that each $u_{3i} \in U_3'$ has a neighbor $w_{3i} \not\in V(C) \cup U_3$ and all such $w_{3i}$'s are distinct.
Let $U_3''=U_3-U_3'$ and $W_3' = \{w_{3i}\colon u_{3i} \in U_3'\}$.
If $|C| \equiv 0$ (mod 3), then let $S = V(C) \cup U_3 \cup W_3'$, $M_2 = \{v_{3i+1} v_{3i+2}\colon 0 \le i \le \frac{\lvert C \rvert}{3}-1\} \cup \{u_{3i} w_{3i}\colon u_{3i} \in U_3'\}$ and $M_1 = \{v_{3i} u_{3i}\colon u_{3i} \in U_3''\}$.
By the maximality of $U_3'$, $(S,M_2,M_1)$ is a removable triple.
However, $\rho(S,M_2,M_1) = \frac{\frac{4}{3}\lvert C \rvert - \lvert U_3'' \rvert}{\frac{5}{3}\lvert C \rvert - \lvert U_3'' \rvert} \leq \frac{4}{5}$, a contradiction.
Therefore, $|C| \equiv 2$ (mod 3).
By Claim 2, $u_{\lvert C \rvert-1}$ exists.
By Claim 3, $u_{\lvert C \rvert-1} \not\in U_3$.
Let $S' = V(C) \cup U_3 \cup \{u_{3r+1}\} \cup W_3'$, $M'_2 = \{v_{3i+1} v_{3i+2}\colon 0 \le i \le \frac{\lvert C \rvert-2}{3}-1\} \cup \{v_{3r+1} u_{3r+1}\} \cup \{u_{3i} w_{3i}\colon u_{3i} \in U_3'\}$ and $M'_1 = \{v_{3i} u_{3i}\colon u_{3i} \in U_3''\}$.
By the maximality of $U_3'$, $(S',M_2',M_1')$ is a removable triple.
However, $\rho(S',M_2',M_1') = \frac{\frac{4}{3}(\lvert C \rvert+1) - \lvert U_3'' \rvert}{\frac{5}{3}(\lvert C \rvert+1) - \lvert U_3'' \rvert} \leq \frac{4}{5}$, a contradiction.
\end{proof}

\section{Graphs on surfaces of small genus}

The first objective of this section is to prove Conjecture \ref{conj2}. 
A {\it surface} is a $2$-connected manifold.
Let $G$ be a graph and $\Sigma$ a surface.
Every connected component of $\Sigma-G$ is called a {\it face}.
We say that $G$ is {\it $2$-cell embeddable in $\Sigma$} if $G$ can be drawn in $\Sigma$ such that every face is homeomorphic to an open disk.

Let $G$ be a graph that is $2$-cell embeddable in a surface $\Sigma$.
We fix a $2$-cell embedding of $G$ in $\Sigma$.
We denote the set of faces of this embedding by $F(G)$.
Then for every face $f$ of this embedding, there exists a closed walk in $G$ that contains all edges incident with $f$.
We define the {\it degree} of a face $f$ to be the length of the shortest such walk.
We say that a vertex is a {\it $t$-vertex} if the degree of this vertex is $t$.
Similarly, we say that a face is a {\it $t$-face} if the degree of this face is $t$.


\begin{theorem} \label{planar}
  If $G$ is a graph of $n$ vertices that can be 2-cell embedded in the plane or the projective plane, then $\gamma_R'(G)\leq \frac{6}{7}n$.
\end{theorem}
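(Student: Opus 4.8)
The plan is to argue by contradiction. Let $G$ be a counterexample with $\lvert V(G)\rvert$ minimum; note $G$ is connected, since a disconnected graph is never $2$-cell embeddable in a surface. The first point is that the class of graphs covered by the theorem is closed under passing to connected subgraphs — a connected subgraph embeds in the projective plane, and such an embedding is $2$-cell unless the subgraph is planar, in which case it is $2$-cell embeddable in the plane — and that $\gamma'_R$ is additive over components. Hence for every nonempty $S\subseteq V(G)$ each component of $G-S$ is a strictly smaller graph in the class, so $\gamma'_R(G-S)\le\frac{6}{7}\lvert V(G-S)\rvert$; since $\gamma'_R(G)>\frac{6}{7}\lvert V(G)\rvert$, Lemma \ref{ratio} shows that every removable triple of $G$ has ratio greater than $\frac{6}{7}$.

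I would then extract the local structure of $G$ from Lemma \ref{mad}: applied to a vertex $v$ of degree $d$ with a matching $M$ of $G[N(v)]$, it produces a removable triple of ratio at most $\frac{2d-2\lvert M\rvert}{2d+1-2\lvert M\rvert}$, so this must exceed $\frac{6}{7}$, i.e.\ $d-\lvert M\rvert\ge 4$. With $M=\emptyset$ this gives $\delta(G)\ge 4$; with $d=4$, $\lvert M\rvert=1$ it gives that the neighbourhood of every $4$-vertex is independent (so no $4$-vertex lies on a triangle); with $d=5$, $\lvert M\rvert=2$ it gives that $G[N(v)]$ has matching number at most $1$ whenever $d(v)=5$, hence $G[N(v)]$ is a triangle or a star.

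The core is a discharging argument on a fixed $2$-cell embedding of $G$ in $\Sigma$, where $\Sigma$ is the sphere or the projective plane. Give each vertex $v$ charge $d(v)-6$ and each face $f$ charge $2d(f)-6$; by $\sum_v d(v)=\sum_f d(f)=2\lvert E(G)\rvert$ and Euler's formula, the total charge is $-6\chi(\Sigma)\le -6<0$, while every face begins with nonnegative charge because $G$ is simple with $\delta(G)\ge 4$, so all faces have degree at least $3$. The rule: every face of degree at least $4$ sends $\frac{1}{2}$ across each of its corners to the incident vertex. A face of degree $d\ge 4$ keeps $2d-6-\frac{d}{2}=\frac{3}{2}(d-4)\ge 0$; a $3$-face keeps $0$. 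A vertex of degree $4$ has no corner on a $3$-face, so it receives $2$ and ends at $0$. A vertex of degree $5$ has at most two corners on $3$-faces — since $G[N(v)]$ is a triangle or a star, at most two of the five cyclically consecutive pairs of neighbours form an edge of $G[N(v)]$ — so it receives at least $\frac{3}{2}$ and ends at a value $\ge\frac{1}{2}$. A vertex of degree at least $6$ starts nonnegative and only gains. So after discharging all charges are nonnegative, contradicting that the total is negative.

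The main obstacle I anticipate is making the discharging tight: $3$-faces carry no charge, so one must bound how many corners of a low-degree vertex can lie on $3$-faces, which is exactly what the reducible configurations above (independent neighbourhood at $4$-vertices, matching number $\le 1$ at $5$-vertices) are designed to control; getting these local conditions in precisely the right form is the delicate part. A lesser technicality is the opening reduction — guaranteeing $\gamma'_R(G-S)\le\frac{6}{7}\lvert V(G-S)\rvert$ even when $G-S$ is disconnected — which is why the induction is run over the full (possibly disconnected) class and uses additivity of $\gamma'_R$.
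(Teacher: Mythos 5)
Your proposal is correct and follows essentially the same route as the paper: a minimum counterexample, Lemma \ref{ratio} to force every removable triple to have ratio greater than $\frac{6}{7}$, Lemma \ref{mad} to get minimum degree at least $4$ and the restrictions at $4$- and $5$-vertices, and then a discharging argument via Euler's formula on the fixed $2$-cell embedding. The only differences are cosmetic bookkeeping — you use charges $d(v)-6$ and $2d(f)-6$ with large faces paying vertices, whereas the paper uses $\deg(x)-4$ with vertices paying $3$-faces, and you phrase the reducible configurations via matchings in $G[N(v)]$ (independent neighbourhood at $4$-vertices, matching number at most $1$ at $5$-vertices) rather than via incident $3$-faces — plus you explicitly check that components of $G-S$ remain $2$-cell embeddable in the plane or projective plane, a point the paper leaves implicit.
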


\begin{proof}
The theorem is clearly true when $n=1$.
Suppose that $G$ is a counterexample with the minimum size of $|V(G)|$ to the theorem.
In particular, $G$ is connected.
Let $\Sigma$ be a surface in which $G$ can be $2$-cell embedded.
We fix a $2$-cell embedding of $G$ in $\Sigma$.
In addition, every removable triple of $G$ has ratio greater than $\frac{6}{7}$ by Lemma \ref{ratio}.
This implies that every vertex of $G$ has degree at least four by Lemma \ref{mad}.

If there exists a $4$-vertex $v$ incident to a 3-face, then $G[N(v)]$ has a matching $M$ of size one.
By Lemma \ref{mad}, there is a removable triple with ratio at most $\frac{2 \times 4 - 2 \times 1}{2 \times 4 + 1 - 2 \times 1}=\frac{6}{7}$, a contradiction.
Hence, no $4$-vertex is incident to a 3-face.

If there exists a $5$-vertex $v$ incident to at least three 3-faces, then $G[N(v)]$ has a matching $M$ of size two.
By Lemma \ref{mad}, there is a removable triple with ratio at most $\frac{2 \times 5 - 2 \times 2}{2 \times 5 + 1 - 2 \times 2} =\frac{6}{7}$, a contradiction.
So every $5$-vertex is incident to at most two $3$-faces.

Now we shall derive a contradiction by means of the discharging method.

For every $x \in V(G) \cup F(G)$, we define the charge $\ch(x)$ on $x$ to be $\deg(x)-4$.
According to Euler's formula, the sum of the charge is
$$\sum_{v\in V(G)} (\deg(v)-4) + \sum_{f \in F(G)}(\deg(f)-4) = -4|V|+4|E|-4|F|<0.$$

For every vertex $v$ incident to exactly $t$ $3$-faces with $t>0$, we move $\frac{\deg(v)-4}{t}$ units of charge to each $3$-face incident to it.
We denote the new charge on each $x \in V(G) \cup F(G)$ by $\ch'(x)$.
Clearly, $\sum_{x \in V(G) \cup F(G)} \ch(x) = \sum_{x \in V(G) \cup F(G)} \ch'(x)$.

We shall prove that $\ch'(x) \geq 0$ for every $x \in V(G) \cup F(G)$.
It is obviously true unless $x$ is a $3$-face.
Let $f$ be a $3$-face.
Note that $\ch(f)=-1$, and we proved that $f$ is not incident to any $4$-vertex.
Furthermore, as every $5$-vertex is incident to at most two $3$-faces, every $5$-vertex sends at least $\frac{1}{2}$ unit of charge to each $3$-face incident to it.
According to the discharing rule, $f$ receives at least $\frac{d-4}{d} \geq \frac{1}{3}$ units of charge from each $d$-vertex incident to it for $d\geq 6$, and receives at least $\frac{5-4}{2}=\frac{1}{2}$ units of charge from each 5-vertex incident to it.
Therefore, $\ch'(f) \geq 0$.
Consequently, $0>\sum_{x \in V(G) \cup F(G)} \ch(x) = \sum_{x \in V(G) \cup F(G)} \ch'(x) \geq 0$, a contradiction.
\end{proof}

The {\it girth} of a graph is the minimum length of a cycle in the graph. (The girth is infinite if the graph has no cycle.)
The other main theorem of this section is the following.
We improve the upper bound from $\frac{6}{7} n$ to $\frac{4}{5} n$ if we additionally assume the graph has girth at least five.
In fact, our result generalizes to surfaces of genus larger than the projective plane.


\begin{theorem} \label{planar_g5}
Let $\Sigma$ be the plane, projective plane, torus or Klein bottle.
If $G$ is a graph of girth at least 5 on $n$ vertices that can be 2-cell embedded in $\Sigma$, then $\gamma'_{R}(G)\leq \frac{4}{5}n$.
\end{theorem}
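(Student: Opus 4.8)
The plan is to follow the template of the proof of Theorem~\ref{planar}: pick a minimum counterexample $G$ (with respect to $|V(G)|$, so $G$ is connected), fix a $2$-cell embedding of $G$ in $\Sigma$, extract structural restrictions from the removable-triple lemmas, and then run a discharging argument. The two new ingredients are the girth hypothesis (which forces all faces to be large) and Theorem~\ref{subcubic} (which disposes of the cubic case).

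First I would record the easy reductions. By Lemma~\ref{ratio} every removable triple of $G$ has ratio greater than $\tfrac45$; hence by Lemma~\ref{mad}, applied to a minimum-degree vertex with $M=\emptyset$, every vertex has degree at least $3$, since a vertex of degree $d\le 2$ gives a removable triple of ratio at most $\tfrac{2d}{2d+1}\le\tfrac45$. Because $G$ has girth at least $5$ it is triangle-free and $C_4$-free, and together with $\delta(G)\ge 3$ this forces every face of the fixed embedding to have degree at least $5$. Next I would dispose of the subcubic case: if $G$ were subcubic then, being connected with $\delta(G)\ge 3$, it would be cubic, and having girth $\ge 5$ it could not be $K_{3,3}$, so Theorem~\ref{subcubic} would give $\gamma'_R(G)\le\tfrac45|V(G)|$, a contradiction. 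Hence $G$ has a vertex of degree at least $4$. This is also the general mechanism by which Theorem~\ref{subcubic} is used: whenever the discharging exposes a set $S$ carrying a removable triple of ratio $\le\tfrac45$ whose deletion leaves a subcubic graph (automatically $K_{3,3}$-component-free, as girth $\ge 5$), Lemma~\ref{remove} together with Theorem~\ref{subcubic} gives $\gamma'_R(G)\le\tfrac45|S|+\gamma'_R(G-S)\le\tfrac45|V(G)|$, a contradiction.

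For the discharging I would assign each $x\in V(G)\cup F(G)$ the charge $\ch(x)=\deg(x)-4$, so the total charge equals $4|E(G)|-4|V(G)|-4|F(G)|$. By Euler's formula $|V(G)|-|E(G)|+|F(G)|$ is $2$, $1$, $0$, $0$ according as $\Sigma$ is the plane, the projective plane, the torus, or the Klein bottle, so the total charge is $-8$, $-4$, $0$, $0$ respectively; this non-positivity is exactly why the theorem is restricted to these four surfaces. The only deficient vertices are the $3$-vertices (charge $-1$), while the surplus sits on faces of degree $\ge 5$ (charge $\ge 1$) and on vertices of degree $\ge 5$ (charge $\ge 1$), at least one of which, or a face of degree $\ge 6$, must exist by the previous paragraph. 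I would choose rules that push charge from large faces and high-degree vertices to incident $3$-vertices, designed so that a $3$-vertex fails to reach charge $\ge 0$ only inside a short list of local configurations — essentially, $3$-vertices all of whose neighbours are $3$-vertices and all of whose incident faces are $5$-faces, and $5$-faces carrying too many incident $3$-vertices — and I would show each such configuration is reducible, either producing directly a removable triple of ratio $\le\tfrac45$ (built from the offending $5$-face, the second neighbourhoods of its vertices, and a small matching saturating its boundary, in the spirit of Lemma~\ref{mad}) or leaving a subcubic graph after deletion. After redistribution every charge is $\ge 0$, which already contradicts the total being $-8$ or $-4$ for the plane and projective plane; for the torus and Klein bottle it forces every final charge to be $0$, impossible once one checks that a graph with $\delta\ge 3$, girth $\ge 5$, all faces of degree $5$ and all vertices of degree $\le 4$ violates the face count forced by Euler's formula, while the presence of any vertex of degree $\ge 5$ or face of degree $\ge 6$ leaves residual positive charge.

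The hard part is the reducibility analysis. Since $G[N(v)]$ is edgeless for every $v$ (girth $\ge 5$), Lemma~\ref{mad} on a single vertex only yields ratio $\tfrac{2d}{2d+1}>\tfrac45$ for $d\ge 3$, so every useful removable triple must be assembled from larger neighbourhoods and from the $5$-faces — precisely the faces on which the girth bound is tight — and the crux in each case is to check that enough of the boundary of the chosen set $S$ can be saturated by a matching small enough to keep $\rho(S,M_2,M_1)\le\tfrac45$. Engineering the discharging rules and this finite list of reducible configurations so that they mesh — so that every graph surviving the discharging is subcubic and therefore handled by Theorem~\ref{subcubic} — is where the real work lies.
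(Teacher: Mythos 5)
Your skeleton matches the paper's proof (minimum counterexample, Lemmas \ref{ratio} and \ref{mad} to get minimum degree at least $3$, Theorem \ref{subcubic} to dispose of the cubic case, charges $\deg(x)-4$ on vertices and faces, Euler's formula), but the heart of the argument is exactly the part you defer as ``where the real work lies,'' and without it there is no proof. The paper needs precisely one reducible configuration, and proves it as a claim: \emph{every $5$-face is incident to at most two $3$-vertices}. If a $5$-face $v_0v_1v_2v_3v_4$ had three $3$-vertices, two of them, say $v_0,v_2$, are non-adjacent; taking outside neighbours $u_0,u_1,u_2$ of $v_0,v_1,v_2$ and further neighbours $w_0\in N(u_0)$, $w_2\in N(u_2)$ (all ten vertices distinct because the girth is at least $5$ and $\delta\ge 3$), the set $S=\{v_0,\dots,v_4,u_0,u_1,u_2,w_0,w_2\}$ with $M_2=\{u_0w_0,\,v_1u_1,\,u_2w_2,\,v_3v_4\}$ is a removable triple of ratio exactly $\tfrac{8}{10}=\tfrac45$, a contradiction. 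With this claim the discharging rule is simple and completely specified: each face sends its charge $\deg(f)-4$ to the $3$-vertices on its boundary walk (split proportionally to multiplicity), so a $3$-vertex receives at least $\tfrac12$ per incidence with a $5$-face and at least $\tfrac{d-4}{d}\ge\tfrac13$ per incidence with a $d$-face, $d\ge 6$, hence at least $1$ in total. You never produce this (or any) verified reducible configuration or rule; your tentative list of bad configurations (``$3$-vertices all of whose neighbours are $3$-vertices'') does not correspond to a rule that closes, so the proposal is a plan rather than a proof.

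There is also a concrete error in your endgame for the torus and Klein bottle. You claim that a graph with $\delta\ge 3$, girth at least $5$, all faces of degree $5$ and all vertices of degree at most $4$ ``violates the face count forced by Euler's formula.'' It does not: with Euler genus $2$ one has $|V|-|E|+|F|=0$ and $2|E|=5|F|$, giving $3|E|=5|V|$, i.e.\ average degree $\tfrac{10}{3}$, which is perfectly consistent with a mixture of $3$- and $4$-vertices. So after your redistribution the zero-total-charge case is not killed by counting alone; the paper instead finishes by invoking Theorem \ref{subcubic} to force a vertex of degree at least four and arguing that then not all final charges can vanish. You would need to supply an argument of that kind (or strengthen the discharging) rather than the Euler-count check you describe.
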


\begin{proof}
The theorem holds for $n=1$.
Suppose that $G$ is a counterexample with the minimum size of $|V(G)|$ to the theorem.
In particular, $G$ is connected.
Let $\Sigma$ be a surface of minimum genus in which $G$ can be 2-cell embedded.
We fix a $2$-cell embedding of $G$ in $\Sigma$.
By Lemma \ref{ratio}, every removable triple of $G$ has ratio greater than $\frac{4}{5}$.
So every vertex of $G$ has degree at least three by Lemma \ref{mad}.

Since the girth of $G$ is at least five, every $5$-face is surrounded by a cycle of length five.
We claim that every 5-face $f = (v_0,v_1,v_2,v_3,v_4)$ is incident to at most two 3-vertices.
Suppose to the contrary that $f$ is incident to at least three 3-vertices.
So two $3$-vertices incident with $f$, say $v_0$ and $v_2$, are non-adjacent.
Let $u_i \in N(v_i)-\{v_j\colon 0\leq j \leq 4\}$ for $0 \le i \le 2$.
Since $G$ has no 3-cycles and no 4-cycles, $u_0, u_1, u_2, v_0, v_1, v_2, v_3, v_4$ are eight distinct vertices.
Since $u_0$ and $u_2$ has degree at least three and $G$ has no $3$-cycles and no 4-cycles, we may choose $w_0$ and $w_2$ such that $w_i \in N(u_i)-\{v_j\colon 0 \leq j \leq 4\}$ for $i=0,2$ and $w_0 \ne w_2$.
See Figure \ref{fig3}.
Let $S = \{v_0,v_1,v_2,v_3,v_4,u_0,u_1,u_2,w_0,w_2\}$ and $M_2 = \{u_0w_0, v_1u_1, u_2w_2, v_3v_4\}$.
Then $(S, M_2, \emptyset)$ is a removable triple of ratio $\frac{8}{10} = \frac{4}{5}$, a contradiction.
This proves the claim.

\begin{figure}[htb]
\setlength{\unitlength}{0.1cm}
\begin{center}
\begin{picture}(40,17)(0,-7)
  \put(00,00){\circle*{\ps}} \put(00,-3){$u_0$} \put(00,00){\line(1,0){40}}
  \put(00,10){\circle*{\ps}} \put(0.2,7){$w_0$} \put(00,00){\line(0,1){10}} \put(-0.2,00){\line(0,1){10}}
  \put(0.2,00){\line(0,1){10}}
  \put(10,00){\circle*{\ps}} \put(10,-3){$v_0$} \put(10,00){\line(0,-1){10}}\put(10,-10){\circle*{\ps}}
  \put(10,-13){$v_4$}
  \put(10,-10){\line(1,0){20}} \put(10,-9.8){\line(1,0){20}} \put(10,-10.2){\line(1,0){20}}
  \put(20,00){\circle*{\ps}} \put(20,-3){$v_1$}
  \put(20,10){\circle*{\ps}} \put(20.2,7){$u_1$} \put(20,00){\line(0,1){10}} \put(19.8,00){\line(0,1){10}}
  \put(20.2,00){\line(0,1){10}}
  \put(30,00){\circle*{\ps}} \put(30,-3){$v_2$} \put(30,00){\line(0,-1){10}}\put(30,-10){\circle*{\ps}}
  \put(30,-13){$v_3$}
  \put(40,00){\circle*{\ps}} \put(40,-3){$u_2$}
  \put(40,10){\circle*{\ps}} \put(40.2,7){$w_2$} \put(40,00){\line(0,1){10}} \put(39.8,00){\line(0,1){10}}
  \put(40.2,00){\line(0,1){10}}
\end{picture}
\end{center}
\caption{A 5-face incident to at least three 3-vertices.}
\label{fig3}
\end{figure}
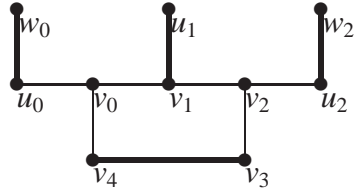

For every $x \in V(G) \cup F(G)$, define the charge $\ch(x)$ on $x$ to be $\deg(x)-4$.
According to Euler's formula, the sum of the charges is
$$
   \sum_{v\in V(G)} (\deg(v)-4) + \sum_{f \in F(G)}(\deg(f)-4) = -4|V|+4|E|-4|F|\leq 0.
$$

Now we describing the discharging rule.
We shall move charges of faces to vertices incident to it.
But we should notice that some face is not surrounded by a cycle.
For every face $f$, let $W_f$ be a shortest closed walk containing all edges incident with $f$, and let $W'_f$ be the walk obtained from $W_f$ by deleting the last vertex and the last edge.
For every vertex $v$ incident to $f$, define $t_{f,v}$ to be the number such that $v$ appears $t_{f,v}$ times in $W'_f$.
The discharging rule is that for every face $f$ incident to some $3$-vertex, move $\frac{\deg(f)-4}{\sum t_{f,u}}t_{f,v}$ units of charge to each $3$-vertex $v$ incident to $f$, where the summation in the denominator is over all $3$-vertices $u$ incident to $f$.
We denote the number of the units of new charges of $x$ by $\ch'(x)$.

We shall prove that $\ch'(x) \geq 0$ for every $x \in V(G) \cup F(G)$.
Observe that it is sufficient to prove this for $3$-vertices.
For every $3$-vertex $v$, $\ch(v)=-1$, and $v$ is incident to three faces of degree at least 5 by the assumption.
The vertex $v$ receives at least $\frac{d-4}{d}t_{f,v} \geq \frac{1}{3}t_{f,v}$ units of charge from a $d$-face incident to it for $d\geq 6$, and receives at least $\frac{5-4}{2}t_{f,v}=\frac{1}{2}t_{f,v}$ units of charge from each $5$-face incident to it by the previous claim.
Then $\ch'(v) \geq 0$ for every $3$-vertex $v$, since $\sum_{f \in F(G)} t_{f,v}=3$.

Since $0 \geq \sum_{x \in V(G) \cup F(G)} \ch(x) =\sum_{x \in V(G) \cup F(G)} \ch'(x) \geq 0$, we know that $\ch'(x)=0$ for every $x \in V(G) \cup F(G)$.
By Theorem \ref{subcubic}, there exists a vertex $v$ of degree at least four.
But $\ch'(v)>\deg(v)-4 \geq 0$ as $G$ has girth at least five.
So $0 \geq \sum_{x \in V(G) \cup F(G)} \ch'(x) >0$, a contradiction.
This proves that $\gamma'_R(G) \leq \frac{4}{5}n$.
\end{proof}

Based on Theorems \ref{planar} and \ref{planar_g5}, we expect the following conjecture holds.
Note that the upper bound in the following tends to $\frac{2}{3}$ when $k$ tends to infinity.
It is an evidence that supports the conjecture, since the behavior of a planar graph with large girth is like a tree.

\begin{conjecture}
  If $G$ is a planar graph of girth at least $3k+2$ on $n$ vertices, then $\gamma'_{R}(G)\leq \frac{2k+2}{3k+2}n$.
\end{conjecture}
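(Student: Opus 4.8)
The natural plan is to follow the minimum-counterexample-plus-discharging strategy used for Theorems~\ref{planar} and~\ref{planar_g5}. Write $g = 3k+2$ and $\alpha = \frac{2k+2}{3k+2}$, and note the convenient identity $\alpha = \frac{2g+2}{3g} = \frac{2}{3} + \frac{2}{3g}$. The cases $k \le 1$ are already settled ($k=0$ is trivial and $k=1$ is Theorem~\ref{planar_g5}), so assume $k \ge 2$; then $g \ge 8$. Let $G$ be a counterexample minimizing $|V(G)|$; then $G$ is connected and, by Lemma~\ref{ratio}, every removable triple of $G$ has ratio strictly greater than $\alpha$. Since $g \ge 5$, the neighbourhood of each vertex is stable, so Lemma~\ref{mad} (with $|M|=0$) only produces a triple of ratio $\tfrac{2d}{2d+1}$ from a $d$-vertex; as $\tfrac{2}{3} < \alpha$ this rules out $1$-vertices, so $G$ has minimum degree at least $2$. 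Unlike in Theorem~\ref{planar_g5}, one cannot eliminate $2$-vertices this way when $k \ge 2$ (a $2$-vertex gives ratio $\tfrac45 > \alpha$), so the heart of the argument is controlling the $2$-vertices.

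The key reducibility lemma I would establish concerns maximal ``chains'' of $2$-vertices. If $v_1,\dots,v_\ell$ is a path of consecutive $2$-vertices whose neighbours $v_0,v_{\ell+1}$ outside the chain have degree at least $3$, then taking $S=\{v_0,v_1,\dots,v_{\ell+1}\}$, putting the two end edges $v_0v_1$ and $v_\ell v_{\ell+1}$ into $M_2$ (forced, since $v_0$ and $v_{\ell+1}$ still have neighbours outside $S$), and then spacing weight~$2$ along every third edge of the path yields a removable triple of ratio $\frac{2}{3}+\frac{2}{3(\ell+2)}$ up to a $\bmod\,3$ correction of size $O(1/\ell)$. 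Hence a minimum counterexample contains no chain of more than $\Theta(g)$ consecutive $2$-vertices; it also contains no component that is a cycle of $2$-vertices, because such a component is a $C_m$ with $m \ge g$ and $\gamma'_R(C_m) = \lceil 2m/3 \rceil \le \alpha m$. One should also record a few small-configuration reductions (e.g.\ around a $2$-vertex both of whose neighbours are branch vertices lying on a short common face) to feed the discharging.

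I would then discharge on the fixed plane embedding, assigning $\ch(x) = (g-2)\deg(x) - 2g$ to each vertex and $\ch(f) = 2(\deg(f)-g)$ to each face, so that Euler's formula gives $\sum_x \ch(x) + \sum_f \ch(f) = -2g\cdot 2 = -4g < 0$. By the girth hypothesis every face has $\ch(f) \ge 0$, with equality exactly for $g$-faces, and since $g \ge 8$ every vertex of degree at least $3$ has $\ch \ge g-6 > 0$; thus all negative charge sits on the $2$-vertices, each carrying $-4$. The rule would move charge from the large faces and high-degree vertices onto the $2$-vertices along the chains they bound, the point being that the chain-length bound from the reducibility lemma should force the faces and branch vertices near any stretch of $2$-vertices to carry proportionally enough surplus; one then verifies $\ch'(x)\ge 0$ everywhere, contradicting $\sum\ch < 0$. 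Sharpness is witnessed by $C_{3k+2}$, for which $\gamma'_R = \lceil 2(3k+2)/3\rceil = 2k+2 = \alpha(3k+2)$, so the coefficient cannot be lowered.

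The main obstacle is matching the two estimates. The naive chain reducibility above only becomes effective once $\ell$ exceeds a constant times $g$ with constant bigger than~$1$, whereas the naive discharging (a short chain can lie on $g$-faces of zero surplus and be fed only by its two bounding branch vertices, which jointly supply about $\tfrac{2(g-6)}{3}$) wants chains of length only about $g/6$; these windows do not overlap. Closing this gap is the real work and presumably requires sharper removable triples --- for instance ones whose set $S$ absorbs not a single chain but an entire ``pendant'' region of several chains meeting at low-degree branch vertices --- together with a discharging scheme tailored to those configurations, in the spirit of how Theorem~\ref{planar_g5} leans on Theorem~\ref{subcubic} rather than arguing vertex by vertex. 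A secondary nuisance, handled as in the proof of Theorem~\ref{planar_g5}, is cut-vertices and bridges: a $2$-vertex may be incident with a single face traversed twice, so one should either reduce to the $2$-connected case by minimality or phrase everything in terms of the closed-walk degree of a face.
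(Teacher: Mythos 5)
First, a point of calibration: the statement you are proving is not a theorem of the paper but its closing conjecture; the authors offer no proof, only the supporting evidence of Theorems \ref{planar} and \ref{planar_g5} and the heuristic that planar graphs of large girth behave like trees. So the only question is whether your outline actually settles it, and it does not — by your own admission. Your preparatory steps are sound: the identity $\frac{2k+2}{3k+2}=\frac{2}{3}+\frac{2}{3g}$ with $g=3k+2$, the disposal of $k\le 1$, the minimum-counterexample setup with Lemmas \ref{ratio} and \ref{mad} forcing minimum degree $2$, the Euler bookkeeping $(g-2)\deg(v)-2g$ and $2(\deg(f)-g)$ summing to $-4g$, and the sharpness witness $C_{3k+2}$ with $\gamma'_R=2k+2$ are all correct. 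But the two engines of the proof never mesh, exactly as you say: the chain removable triple has ratio roughly $\frac{2}{3}+\frac{2}{3(\ell+2)}$, which drops below $\alpha$ only when $\ell\gtrsim g-2$, whereas a chain of $2$-vertices bounded by two $g$-faces (which carry zero surplus) can be fed only by its two end branch vertices, whose combined spare charge is about $\tfrac{2(g-6)}{3}$, enough to pay the $4$ units per $2$-vertex only when $\ell\lesssim\tfrac{g-6}{6}$. Chains of intermediate length are neither reducible by your lemma nor dischargeable by your rule.

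This missing window is not an artifact of loose constants that bookkeeping could close: it is inhabited by genuine planar graphs. Take any cubic planar graph in which every face has degree equal to the girth $h$ (the dodecahedron, say, with $h=5$) and subdivide every edge the same number $t$ of times. Every face is then a $g$-face with $g=h(t+1)$, every branch vertex is cubic, and every chain has length $t\approx g/h$, which lies strictly between $\tfrac{g-6}{6}$ and $g-2$ once $t$ is moderately large; for such a graph your scheme produces no reducible configuration and ends with negative total charge, so the contradiction never materializes even though the graph is a perfectly ordinary instance of the conjecture. Handling it requires a genuinely new ingredient — essentially an analogue of Theorem \ref{subcubic} for uniformly subdivided subcubic planar graphs, or a removable triple whose set $S$ swallows a whole region of chains and branch vertices at ratio $\le\alpha$ — and you correctly flag this as ``the real work'' without supplying it. As it stands, the proposal is a plausible research plan for attacking the conjecture, not a proof of it.
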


\section{Graphs without $K_{2,3}$-subdivisions}

A graph is {\it outerplanar} if it can be embedded in the plane such that every vertex is incident to the infinite face.
Akbari et al.~\cite{Akbari2} showed that $\gamma'_R(G) \leq \frac{4}{5}n$ for every outerplanar graph of $n$ vertices.
In this section, we generalize the theorem to graphs without $K_{2,3}$-subdivisions, which is a proper superclass of outerplanar graphs.
Recall that $C_5$ attains the bound $\frac{4}{5}n$, and the coefficient $\frac{4}{5}$ of $n$ cannot be improved by excluding finitely many graphs.
We shall prove that the upper bound can be improved if no $5$-cycle in the graph can be separated from the rest of the graph by deleting at most one vertices.

A {\it subdivision} of a graph $H$ is a graph that can be obtained from $H$ by repeatedly deleting an edge $xy$ and adding a new vertex $z$ adjacent to $x,y$.
The following lemma is an immediate consequence of Kuratowski's theorem \cite{kk}.


\begin{lemma} \label{outer Kuratowski}
  A graph $G$ is an outerplanar graph if and only if $G$ does not contain a subgraph isomorphic to a subdivision of $K_4$ or $K_{2,3}$.
\end{lemma}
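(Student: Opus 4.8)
The plan is to deduce the lemma from Kuratowski's theorem, which states that a graph is planar if and only if it contains no subgraph isomorphic to a subdivision of $K_5$ or of $K_{3,3}$. Let $G^+$ denote the graph obtained from $G$ by adding one new vertex $w$ adjacent to every vertex of $G$. First I would recall the standard fact that $G$ is outerplanar if and only if $G^+$ is planar: from an outerplanar drawing of $G$ one obtains a planar drawing of $G^+$ by placing $w$ in the outer face and joining it to every vertex without crossings; conversely, in any planar drawing of $G^+$ the vertex $w$ lies in some face, and deleting $w$ together with its incident edges merges the faces around $w$ into a single face of $G$ whose boundary meets every neighbour of $w$, i.e.\ every vertex of $G$, so $G$ is outerplanar.

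By Kuratowski's theorem it then suffices to prove the surgery equivalence: $G^+$ contains a subdivision of $K_5$ or of $K_{3,3}$ if and only if $G$ contains a subdivision of $K_4$ or of $K_{2,3}$. The reverse implication is easy: given a subdivision of $K_4$ in $G$ with branch vertices $a,b,c,d$, adjoining $w$ and the edges $wa,wb,wc,wd$ yields a subdivision of $K_5$ in $G^+$; given a subdivision of $K_{2,3}$ in $G$ with parts $\{a,b\}$ and $\{c,d,e\}$, adjoining $w$ and the edges $wc,wd,we$ yields a subdivision of $K_{3,3}$ in $G^+$ with parts $\{a,b,w\}$ and $\{c,d,e\}$.

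For the forward implication, let $H \subseteq G^+$ be a subdivision of $K_5$ or of $K_{3,3}$, and argue by cases on the role of $w$ in $H$. If $w \notin V(H)$, then $H \subseteq G$, and a subdivision of $K_5$ (resp.\ of $K_{3,3}$) always contains a subdivision of $K_4$ (resp.\ of $K_{2,3}$), obtained by discarding one branch vertex together with the branch-paths incident to it. If $w$ is a branch vertex of $H$, discard $w$ and the branch-paths incident to it; since branch-paths are internally disjoint from all branch vertices, what remains is a subdivision of $K_4$ or of $K_{2,3}$ lying entirely in $G$. If $w$ is an internal vertex of $H$, then it has degree $2$ in $H$ and lies on a unique branch-path $P$, whose endpoints $p,q$ are branch vertices of $H$ distinct from $w$ and hence belong to $G$; discarding $p$ (and the branch-paths incident to it, which include $P$) removes $w$ from the picture and leaves a subdivision of $K_4$ or of $K_{2,3}$ inside $G$. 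Combining the equivalences — $G$ outerplanar $\Leftrightarrow$ $G^+$ planar $\Leftrightarrow$ ($G^+$ has no $K_5$- or $K_{3,3}$-subdivision) $\Leftrightarrow$ ($G$ has no $K_4$- or $K_{2,3}$-subdivision) — proves the lemma.

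The only point requiring a little care is the last case above: one must be sure that the subdivision salvaged inside $G$ genuinely avoids $w$, which is guaranteed precisely because an internal vertex of a subdivision lies on exactly one branch-path and distinct branch-paths are internally disjoint. Everything else is routine bookkeeping, which is why the lemma may fairly be called an immediate consequence of Kuratowski's theorem. (One could alternatively phrase the whole argument with minors in place of subdivisions, using that $K_4$ and $K_{2,3}$ have maximum degree three so that a minor of either is automatically a topological minor, but the subdivision version above is self-contained.)
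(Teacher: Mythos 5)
Your proof is correct, and it follows the route the paper itself intends: the paper states the lemma without proof as ``an immediate consequence of Kuratowski's theorem,'' and your cone construction ($G$ outerplanar iff the join $G^{+}$ of $G$ with one new universal vertex is planar, plus the surgery matching $K_5$/$K_{3,3}$-subdivisions in $G^{+}$ with $K_4$/$K_{2,3}$-subdivisions in $G$) is exactly the standard derivation being alluded to. The case analysis on the role of $w$ in the subdivision, including the internal-vertex case, is handled correctly, so the argument is complete.
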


Let $G$ be an outerplanar graph.
We fix an embedding of $G$ in the plane such that every vertex is incident to the infinite face.
We define the {\it internal dual} graph $D(G)$ of $G$ to be the multigraph such that $V(D(G))$ is the set of faces of $G$ except the infinite face, and $E(D(G))=\{uv_e\colon u,v \in F(G), e \in E(G), e$ is incident to $u$ and $v\}$.
Note that $D(G)$ might not be simple by the definition.
But the following lemma shows that $D(G)$ is simple and is a tree, when $G$ is $2$-connected.


\begin{lemma} \label{internal dual}
  If $G$ is a $2$-connected outerplanar graph that is embedded in the plane such that every vertex is incident to the infinite face, then $D(G)$ is a tree.
\end{lemma}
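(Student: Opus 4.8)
The plan is to prove that $D(G)$ is a tree by first showing it has no parallel edges (so it is simple), then showing it is connected, and finally showing it is acyclic; the crucial structural input is Lemma~\ref{outer Kuratowski}, which forbids $K_4$- and $K_{2,3}$-subdivisions in $G$.

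\textbf{Simplicity.} First I would show $D(G)$ has no loops and no parallel edges. A loop would require an edge $e$ incident to the same internal face on both sides, which is impossible in a $2$-connected plane graph (every edge lies on two distinct faces). For parallel edges: if two internal faces $u,v$ shared two distinct edges $e,e'$ on their common boundary, then $e$ and $e'$ together with the two boundary arcs of $u$ and of $v$ would produce a configuration in which some vertex fails to lie on the infinite face, or one can directly extract a $K_{2,3}$-subdivision. I would make this precise: the two faces $u$ and $v$ glued along $e$ and $e'$ form a closed region whose boundary is two cycles sharing $e,e'$; the four endpoints of $e,e'$ (or fewer, handled separately) and the interior arcs give branch vertices of a $K_{2,3}$-subdivision unless $G$ is very small, and small cases are checked directly. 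This contradicts Lemma~\ref{outer Kuratowski}.

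\textbf{Connectedness.} Since $G$ is connected and $2$-connected, its set of internal faces is ``face-connected'': any two internal faces can be joined by a sequence of internal faces consecutive ones sharing an edge. This is a standard consequence of the Jordan curve theorem applied to a connected plane graph (the dual is connected; deleting the outer-face vertex from the full dual keeps the rest connected because the outer face of a $2$-connected outerplanar graph is bounded by a Hamiltonian cycle, so every internal edge of that cycle—i.e.\ every chord—still leaves the internal faces on one side reachable). Hence $D(G)$ is connected.

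\textbf{Acyclicity.} This is the step I expect to be the main obstacle. Suppose $D(G)$ contains a cycle $f_0 f_1 \cdots f_{k-1} f_0$ of internal faces, where consecutive faces $f_i,f_{i+1}$ share a chord $e_i$ of the outer Hamiltonian cycle. The union of the closed regions bounded by $f_0,\dots,f_{k-1}$ is an annular-or-disk region of the disk bounded by the outer cycle, and the chords $e_0,\dots,e_{k-1}$ are pairwise non-crossing (planarity) and ``cyclically nested'' around this region. I would argue that this forces a vertex of $G$ to be enclosed strictly inside the union of these faces, hence not on the infinite face—contradiction; alternatively, and more robustly, I would directly exhibit a $K_4$-subdivision or $K_{2,3}$-subdivision using the chords $e_0,\dots,e_{k-1}$ together with arcs of the outer cycle as the branch paths, which contradicts Lemma~\ref{outer Kuratowski}. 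Concretely, for $k \ge 2$ the chords $e_0$ and $e_1$ already share an endpoint or are disjoint: in the disjoint case their four endpoints split the outer cycle into four arcs which, together with $e_0,e_1$ and a path through the interior faces, realize $K_{2,3}$; the shared-endpoint case reduces to a smaller cycle in $D(G)$ or produces $K_4$. Combining the three parts, $D(G)$ is a connected, simple, acyclic multigraph, i.e.\ a tree.
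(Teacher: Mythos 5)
Your acyclicity step splits into two alternatives, and only the first one can be completed: the observation that a cycle in $D(G)$ encloses a vertex of $G$, which then cannot be incident to the infinite face, is precisely the paper's (one-line) proof, and the paper treats a pair of parallel dual edges as a cycle of length two, so your separate ``simplicity'' discussion is subsumed by it. The branch you describe as ``more robust'' is the one that fails. Under the hypotheses of the lemma $G$ is outerplanar, so by Lemma~\ref{outer Kuratowski} it contains no subdivision of $K_4$ or $K_{2,3}$ whatsoever; in particular the subgraph formed by the outer Hamiltonian cycle together with the pairwise non-crossing chords $e_0,\dots,e_{k-1}$ is itself outerplanar, so no choice of ``arcs of the outer cycle as the branch paths'' can realize a forbidden subdivision, and the ``path through the interior faces'' you invoke is not a path in $G$ at all (every vertex of $G$ lies on the outer cycle). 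The same objection hits your parallel-edge argument: two internal faces sharing two edges do not combinatorially force a $K_{2,3}$-subdivision in the union of their boundaries. For instance, $K_4$ minus an edge, drawn with the triangle as the outer face and the remaining vertex $w$ inside, has two internal faces sharing the two edges at $w$, yet the graph is outerplanar; the contradiction available in that configuration is topological (the vertex $w$ is enclosed, violating the hypothesis that all vertices lie on the infinite face), not a forbidden-subdivision one. So the contradiction you need simply cannot be reached through Lemma~\ref{outer Kuratowski}; it must come from the embedding hypothesis.

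To repair the proof, commit to the topological route and justify it: realize the alleged cycle of $D(G)$ as a closed curve in the plane meeting $G$ only in one interior point of each crossed edge $e_i$; then each $e_i$ has one endpoint strictly inside and one strictly outside the curve, while the infinite face lies entirely outside (the curve passes only through bounded faces), so an inside endpoint is a vertex of $G$ not incident to the infinite face, a contradiction. This handles loops, parallel edges and longer cycles uniformly, exactly as in the paper. Your connectedness paragraph addresses a point the paper leaves implicit, which is reasonable, but the rerouting argument around the outer-face vertex of the full dual is garbled as written (edges of the Hamiltonian cycle are not chords); a cleaner statement is that the closed internal faces tile the closed disk bounded by the Hamiltonian cycle, and one can induct on the number of chords.
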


\begin{proof}
Since $G$ is $2$-connected, $D(G)$ has no loops.
Suppose to the contrary that there is a cycle $v_1v_2\cdots v_nv_1$ in $D(G)$, where $n \geq 2$.
Then there is a vertex $u$ of $G$ inside the cycle, but $u$ is not incident to the infinite face, a contradiction.
\end{proof}

A {\it cut-vertex} in a graph is a vertex such that deleting this vertex from the graph results in at least two components.
A {\it block} $B$ in a graph $G$ is a maximal subgraph of $G$ of order at least two such that $B$ has no cut-vertex.
It is well-known that every graph has a block containing at most one cut-vertex.
And we call such a block an {\it end-block}.

\begin{theorem} \label{K_{2,3} cut}
  Let $G$ be a graph of $n$ vertices that does not contain a subgraph isomorphic to a subdivision of $K_{2,3}$.
  If $G$ does not contain $C_5$ as a component and there does not exist a vertex $v$ such that $G-v$ contains $C_5$ as a component, then $\gamma'_R(G) \leq \frac{3}{4}n$.
\end{theorem}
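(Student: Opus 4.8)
The plan is to take a vertex-minimum counterexample $G$ and derive a contradiction by producing a removable triple of ratio at most $\frac34$ whose deletion leaves a graph still covered by the inductive hypothesis, contradicting Lemma~\ref{ratio}. First I reduce to $G$ connected: an isolated vertex is trivial, and if $G$ is disconnected then each component is again $K_{2,3}$-subdivision-free and inherits the property that deleting at most one vertex never isolates a $5$-cycle, so the bound follows componentwise. From now on $G$ is connected with $\gamma'_R(G)>\frac34\lvert V(G)\rvert$, while every $K_{2,3}$-subdivision-free graph on fewer vertices all of whose components satisfy the $C_5$-hypothesis obeys the bound; by Lemmas~\ref{remove} and \ref{ratio} every removable triple $(S,M_2,M_1)$ of $G$ for which $G-S$ satisfies those hypotheses has $\rho(S,M_2,M_1)>\frac34$.

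The structural input is that every $2$-connected $K_{2,3}$-subdivision-free graph is either $K_4$ or outerplanar. Indeed, a subdivision of $K_4$ on at least five vertices already contains a subdivision of $K_{2,3}$ (take a branch-path that is subdivided as one of three internally disjoint, length-$\ge 2$ paths joining two branch vertices), and a $2$-connected graph on at least five vertices that contains $K_4$ contains a subdivision of $K_{2,3}$ (use Menger to route a $K_4$-avoiding path through an outside vertex as the third long path); so by Lemma~\ref{outer Kuratowski} a block of $G$ that is neither $K_4$ nor a single edge is $2$-connected outerplanar, and then its internal dual is a tree by Lemma~\ref{internal dual}. I will use that tree to locate ears.

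Next I eliminate cut vertices. If $G$ has a cut vertex, pick an end-block $B$ with cut vertex $v$. If $B=K_2$ with leaf $z$, then $v$ has another neighbour $x$ (as $G\ne K_2$), and one checks $(\{v,z,x\},\{vx\},\emptyset)$ is a removable triple of ratio $\frac23$; similarly two leaves at $v$ give the ratio-$\frac23$ triple $(\{v,z_1,z_2\},\{vz_1\},\emptyset)$. If $B=K_4$ with non-cut vertices $a,b,c$, then $(V(B),\{va\},\{bc\})$ is a removable triple of ratio exactly $\frac34$. If $B$ is $2$-connected outerplanar, I use the ear face of $B$ containing $v$ (a leaf of the internal dual of $B$) together with $v$'s edges into the rest of $G$, splitting into cases by the length of that cycle; every length except $5$ yields ratio at most $\frac34$ directly. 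If $G$ itself is $2$-connected it is $K_4$ — excluded since $\gamma'_R(K_4)=3=\frac34\cdot4$ — or $2$-connected outerplanar, and then the same analysis is applied to a leaf face of $D(G)$: a bounding cycle of length $3$, $4$ or $\ge 6$ gives a removable triple of ratio at most $\frac34$ in its private vertices (absorbing one or both ear-endpoints as needed).

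The main obstacle is, in both the cut-vertex reduction and the outerplanar case, the boundary case of a $5$-cycle: a $C_5$ end-block, or a length-$5$ ear, gives ratio only $\frac45>\frac34$ if handled in isolation. Beating $\frac45$ there requires using the hypothesis that no $5$-cycle is separated from $G$ by deleting at most one vertex: this forces extra vertices and edges adjacent to such a $5$-cycle which can be folded into $S$ to push the ratio down to $\frac34$ (or, via a dominating assignment that reuses an edge at the cut vertex, rules the configuration out of a minimum counterexample). A second, subtler point of bookkeeping runs through all cases: after deleting $S$ one must still be able to induct, so $G-S$ must contain no $C_5$ component and admit no one-vertex deletion isolating a $C_5$; hence $S$ must be chosen, and sometimes enlarged, so that any $5$-cycle freed in $G-S$ was already "pinned" in $G$, which is again exactly where the hypothesis on $G$ is used. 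Verifying these two points across the finitely many shapes (small end-blocks, $K_4$ end-blocks, and ears of length $3$, $4$, $5$, and $\ge 6$) is the bulk of the argument; the rest is routine application of Lemmas~\ref{remove}–\ref{mad}.
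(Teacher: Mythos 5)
Your outline follows essentially the same route as the paper (minimum counterexample, removable triples of ratio at most $\frac34$ via Lemmas \ref{remove}--\ref{mad}, block decomposition, outerplanarity of blocks forced by excluding a $K_{2,3}$-subdivision, the internal dual tree of Lemmas \ref{outer Kuratowski} and \ref{internal dual}, and an analysis of leaf faces), but the two points you explicitly defer as ``the bulk of the argument'' are precisely the content of the proof, and what you say about them is too vague and in one place misdirected. First, the inductive bookkeeping: you note that $G-S$ must again have no $C_5$ component and no vertex whose deletion isolates a $C_5$, and that $S$ ``must be chosen, and sometimes enlarged.'' The paper makes this precise through its notion of a useful triple and a maximality argument (its Claims 1--3): when a new $C_5$ is exposed, one must bound the number of such $C_5$'s (this uses $K_{2,3}$-subdivision-freeness and the connectivity of $G[S]$, giving at most two), absorb them together with the offending vertex into $S$, and verify that the enlarged triple still has ratio at most $\frac34$. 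None of this arithmetic or the bound on the number of exposed $5$-cycles appears in your sketch, and even your ``easy'' cases (ears of length $3$ or $4$, pendant edges) are not settled by a ratio computation alone, since removing them can free a $C_5$; the paper needs exactly this machinery there (its Claims 5, 7, 11). Also, long ears are not handled ``directly'' by a removable triple in the paper but by an edge-replacement argument (Claim 10) that itself must rule out $C_8$ end-blocks.

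Second, and more seriously, your plan for the critical $5$-cycle case leans on the wrong hypothesis. A leaf face of the internal dual whose boundary is a $5$-cycle meets the rest of its $2$-connected block in \emph{two} vertices (the shared edge with the parent face), so the assumption that no $C_5$ can be cut off by deleting at most one vertex says nothing about it, and it does not ``force extra vertices and edges adjacent to such a $5$-cycle'' that can simply be folded into $S$. That hypothesis is only used for genuinely separable $5$-cycles ($C_5$ components after one deletion, $C_5$ end-blocks). The actual resolution in the paper is different: after showing every non-root leaf face is a $5$-cycle, it takes a deepest leaf, groups the parent face $Q$ with all its pendant $5$-cycle children into one set $S$, and runs a delicate analysis on $\lvert V(Q)\rvert \bmod 3$ (forcing $\lvert V(Q)\rvert\equiv 1 \pmod 3$, then $\lvert V(Q)\rvert\in\{4,7\}$) together with a careful choice of one or two auxiliary outside vertices to finally produce a useful triple. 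Nothing in your proposal supplies this argument or a substitute for it, so as it stands the proof has a genuine gap exactly at the case that makes the theorem nontrivial.
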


\begin{proof}
The theorem is true when $n \leq 4$.
We suppose that $G$ is a counterexample with the minimum size of $|V(G)|$.
So $G$ is connected and contains at least five vertices.

We say that a triple $(S,M_2,M_1)$ is {\it useful} if it is a removable triple with ratio at most $\frac{3}{4}$ such that $G[S]$ is connected and $G-S$ does not contains $C_5$ as a component.

\noindent
{\bf Claim 1.}
There does not exist a useful triple.

\noindent
{\it Proof.}
Suppose that $(S,M_2,M_1)$ is a useful triple such that $|S|$ is as large as possible.
As $G$ is a minimum counterexample, there exists a vertex $v$ such that $G-(S\cup\{v\})$ contains $C_5$ as a component.
Let $\C$ be the set of components of $G-(S\cup\{v\})$ isomorphic to $C_5$.
Since $G-v$ does not contain $C_5$ as a component, there exists an edge between $S$ and each member of $\C$ in $G$.
As $S$ is connected, we have that $|\C| \leq 2$, otherwise $G$ contains a subgraph isomorphic to a subdivision of $K_{2,3}$, a contradiction.
Let $S' = \{v\} \cup \bigcup_{C \in \C} V(C)$.
Clearly, there exists $M_2' \subseteq E(G[S'])$ such that $(S',M'_2,\emptyset)$ is a removable triple of $G-S$ with ratio at most $\frac{3}{4}$.
Observe that $G-(S \cup S')$ does not contain $C_5$ as a component.
So $(S \cup S', M_2 \cup M'_2, M_1)$ is a useful triple with $|S \cup S'| > |S|$, a contradiction.
$\Box$

\noindent
{\bf Claim 2.}
For every $S \subseteq V(G)$ such that $G[S]$ is connected and $G-S$ contains a component $C$ isomorphic to $C_5$, we have that $|N(S) \cap V(C)| \leq 2$ and $|N(V(C)) \cap S| \geq 2$.

\noindent
{\it Proof.}
First $|N(S) \cap V(C)| \leq 2$, otherwise $G$ contains a subgraph isomorphic to a subdivision of $K_{2,3}$, a contradiction.
Also $|N(V(C)) \cap S| \geq 2$, otherwise, either $G$ contains $C_5$ as a component, or $G$ contains a vertex such that deleting this vertex results in a component isomorphic to $C_5$.
$\Box$

\noindent
{\bf Claim 3.}
For every $S \subseteq V(G)$ such that $G[S]$ is connected and $\lvert N(V(G)-S) \rvert \leq 2$, there are at most two components of $G-S$ isomorphic to $C_5$.

\noindent
{\it Proof.}
By Claim 2, there exist two vertices $x,y \in S$ such that both $x,y$ have neighbors in each component of $G-S$ isomorphic to $C_5$.
Since $G$ does not contain a subgraph isomorphic to a subdivision of $K_{2,3}$, there are at most two components of $G-S$ isomorphic to $C_5$.
$\Box$

\noindent
{\bf Claim 4.}
No end-block of $G$ is isomorphic to $C_5$.

\noindent
{\it Proof.}
Suppose that $B$ is an end-block of $G$ isomorphic to $C_5$.
Let $v$ be the vertex in $B$ adjacent to a vertex not in $B$.
Let $u \in N(v)-V(B)$ and let $S=V(B) \cup \{u\}$.
Let $\C$ be the set of components of $G-S$ isomorphic to $C_5$.
By Claim 3, $\lvert \C \rvert \leq 2$.
For every member $C$ in $\C$, let $M_C$ be a maximal matching in $C$.
Then $(S \cup \bigcup_{C \in \C} V(C), \{uv\} \cup \bigcup_{C \in \C} M_C, \emptyset)$ is a removable triple with ratio $\frac{4(\lvert \C \rvert+1)}{5 \lvert C \rvert+6} \leq \frac{3}{4}$, since $\lvert \C \rvert \leq 2$.
So this removable triple is useful, a contradiction.
$\Box$

\noindent
{\bf Claim 5.}
No removable triple $(S,M_2,M_1)$ of $G$ with ratio at most $\frac{3}{4}$ such that $G[S]$ is connected and $\lvert N(V(G)-S) \rvert \leq 1$.

\noindent
{\it Proof.}
If there exists a removable triple $(S,M_2,M_1)$ of $G$ with ratio at most $\frac{3}{4}$ such that $G[S]$ is connected, and $\lvert N(V(G)-S) \rvert \leq 1$, then $(S,M_2,M_1)$ is useful by Claim 4, contradicting Claim 1.
$\Box$

\noindent
{\bf Claim 6.}
$G$ is not a cycle and no end-block of $G$ is a cycle.

\noindent
{\it Proof.}
Clearly, $G$ is not a cycle.
Suppose that some end-block $B$ of $G$ is a cycle.
By Claim 5, $\lvert V(B) \rvert \neq 5$.
Then it is easy to see that there exist two matchings $M_1,M_2$ of $B$ such that $M_2$ contains an edge incident with the vertex in $N(V(G)-V(B))$ and $(V(B),M_2,M_1)$ has ratio at most $\frac{3}{4}$, contradicting Claim 5.
$\Box$

\noindent
{\bf Claim 7.}
Every vertex of $G$ has degree at least two.

\noindent
{\it Proof.}
Let $v$ be a vertex of degree one, and let $u$ be the neighbor of $v$.
Since $G$ contains at least four vertices, $u$ has a neighbor $w$ other than $v$.
We assume that $w$ is chosen to minimize the number of components of $G-\{u,v,w\}$ isomorphic to $C_5$.
If $G-\{u,v,w\}$ does not contain $C_5$ as a component, then $(\{u,v,w\}, \{u,w\}, \emptyset)$ is an useful triple.
So $G-\{u,v,w\}$ contains $C_5$ as a component.
Note that for each component $C$ of $G-\{u,v,w\}$ isomorphic to $C_5$, $N(C) = \{u,w\}$, by Claim 2.
As we choose $w$ to minimize the number of components of $G-\{u,v,w\}$ isomorphic to $C_5$, there exists only one component $C$ of $G-\{u,v,w\}$ isomorphic to $C_5$ by Claim 3.
Hence, $(\{u,v,w\} \cup V(C), \{uw\} \cup M_2, \emptyset)$ is a useful triple, where $M_2$ is a maximal matching in $C$.
$\Box$

\noindent
{\bf Claim 8.}
No end block of $G$ is isomorphic to $K_4$.

\noindent
{\it Proof.}
Suppose that there exists an end-block $B$ of $G$ isomorphic to $K_4$.
Since $\lvert V(G) \rvert \geq 5$, $B \neq G$.
Let $v$ be the vertex in $N(V(G)-V(B))$.
And let $e$ be an edge of $B$ incident with $v$ and $e'$ the edge of $B$ not adjacent to $e$.
Then $(V(B),\{e\},\{e'\})$ is a removable triple with ratio $\frac{3}{4}$, contradicting Claim 5.
$\Box$

\noindent
{\bf Claim 9.}
Every end-block of $G$ is outerplanar but not an edge.

\noindent
{\it Proof.}
Let $B$ be an end-block of $G$.
By Claim 7, $B$ is not an edge.
Suppose that $B$ is not outerplanar.
By Lemma \ref{outer Kuratowski}, $B$ contains a subgraph isomorphic to a subdivision of $K_4$.
Let $H$ be a subgraph of $B$ isomorphic to a subdivision of $K_4$.
If $\lvert V(H) \rvert \geq 5$, then $H$ contains a subgraph isomorphic to a subdivision of $K_{2,3}$, a contradiction.
So $\lvert V(H) \rvert =4$ and $H=K_4$.
But by Claim 8, $B \neq K_4$, so there exists $v \in V(B)-V(H)$.
Since $B$ is $2$-connected, there exist two paths in $B$ from $v$ to $V(H)$ only intersecting in $v$.
However, it implies that $B$ contains a subgraph isomorphic to a subdivision of $K_{2,3}$, a contradiction.
$\Box$

\noindent
{\bf Claim 10.}
There does not exist a path of four vertices in $G$ such that every vertex is of degree two in $G$.

\noindent
{\it Proof.}
Let $P=v_1v_2v_3v_4$ be a path of four vertices in $G$ such that every vertex in $P$ has degree two in $G$.
Let $v \in N(v_1)-V(P)$.
Note that $v_4$ is not adjacent to $v$, otherwise, $G$ contains an end-block of $G$ isomorphic to the $5$-cycle, contradicting Claim 4.
Let $G'$ be the graph obtained from $G-\{v_1,v_2,v_3\}$ by adding the edge $vv_4$.
It is easy to see that $\gamma'_R(G) \leq \frac{3}{4} |V(G)|$ if $\gamma'_R(G') \leq \frac{3}{4} |V(G')|$.
As $G$ is a minimum counterexample and $G'$ is connected, either $G'=C_5$ or there exists a vertex $w$ in $G'$ such that $G-w$ contains a component isomorphic to $C_5$.
For the former, $G$ is the $8$-cycle; for the latter, $C_8$ is an end-block of $G$.
Both cases contradict Claim 6.
$\Box$

Let $B$ be an end-block of $G$.
By Claim 9, $B$ is outerplanar but not an edge.
We fix an embedding of $B$ such that all vertices are incident with the infinite face.
Let $T$ be the internal dual of $B$.
By Claim 6, $T$ contains at least two vertices.
For every $t \in V(T)$, let $f_t$ be the face of $B$ corresponding to $t$.
If $G \neq B$, let the root of $T$ be a vertex $t$ such that $f_t$ contains the vertex in $N(V(G)-V(B))$; otherwise, let the root of $T$ be an arbitrary vertex.

\noindent
{\bf Claim 11.}
For every non-root leaf $t$ of $T$, the boundary of $f_t$ is a $5$-cycle.

\noindent
{\it Proof.}
Let $S$ be the boundary cycle of $f_t$.
By Claim 10, $\lvert V(S) \rvert \leq 5$.
Suppose that $S$ is a $3$-cycle or a $4$-cycle.
Let $\C$ be the set of components of $G-V(S)$ isomorphic to $C_5$.
$\lvert \C \rvert \geq 1$, otherwise there exists an useful triple $(V(S),M_2,M_1)$ for some $M_2$ and $M_1$.
By Claim 3, $|\C| \leq 1$.
Let $C$ be the member of $\C$.
Note that there does not exist a component $B$ of $G-(V(S) \cup V(C))$ such that $|N(C') \cap (V(S) \cup V(C))| \geq 2$, otherwise $B$ contains a subgraph isormorphic to a subdivision of $K_{2,3}$.
As $B$ is $2$-connected, $B=G[V(S) \cup V(C)]$.
But $(V(B),M_2',M_1')$ is a removable triple with ratio at most $\frac{3}{4}$ such that $B$ is connected and $\lvert N(V(G)-V(B)) \rvert \leq 1$.
It is a contradiction to Claim 5.
$\Box$

Let $s$ be a leaf of $T$ that is as far as from the root of $T$ as possible, and let $p$ be the neighbor of $s$ in $T$.
Let $S$ be the subset of $V(G)$ such that $G[S]$ is the union of the boundary of $f_p$ and the boundary of $f_c$ for each child $c$ of $p$.
Note that each $c$ is a leaf by our choice of $p$.
Also, $\lvert V(f_p \cap f_c) \rvert =2$ for every child $c$ of $p$.
Let $Q$ be the boundary cycle of $f_p$.

Clearly, $G[S]$ is connected.
Suppose that there exists a component $C$ of $G-S$ isomorphic to $C_5$.
Then $N(C) \subseteq S$.
Since $B$ is $2$-connected, $\lvert N(C) \rvert \geq 2$.
By Claim 2, $\lvert N(C) \rvert=2$.
Since $c$ is a leaf for every child $c$ of $p$, $N(C) \subseteq V(f_p)$.
Note that $C$ bounds a face that corresponds to a leaf of $T$.
Also, $C$ does not contain the vertex in $N(V(G)-V(B))$.
So the leaf of $T$ corresponds to $C$ is farther than $s$ from the root of $T$, a contradiction.
So $G-S$ does not contain $C_5$ as a component.
Note that $N(V(G)-S) \subseteq V(Q)$.
Since $s$ is the leaf of $T$ farthest from the root, $\lvert N(V(G)-S) \rvert \leq 2$.
Since $G$ does not contain a subgraph isomorphic to a subdivision of $K_{2,3}$, if $\lvert N(V(G)-S) \rvert \leq 2$, then there two vertices are adjacent.
Let $M$ be a maximal matching of the minimum size such that $M$ contains an edge incident with all vertices in $N(V(G)-S)$.
If $|V(Q)| \not \equiv 1$ (mod $3$), then let $M_2=M$ and $M_1=\emptyset$; otherwise, pick an edge $e$ in $M$ not incident with a vertex in $N(V(G)-S))$, and let $M_1=\{e\}$ and $M_2 = M-M_1$.
For every child $c$ of $p$, there exists an edge $e_c$ not incident with any vertex of $Q$ such that $G[S]-(M \cup \bigcup_{c}e_c)$ has no edges, where the union runs through all children $c$ of $p$.
Let $M^*=\bigcup_{c}e_c$, where the union runs through all children $c$ of $p$.
If $\lvert V(Q) \rvert \equiv 0$ (mod $3$), then the ratio of $(S,M_2 \cup M^*,M_1) = \frac{2}{3}$ .
If $\lvert V(Q) \rvert \equiv 2$ (mod $3$), then $\lvert S \rvert \geq \lvert Q \rvert +3 \geq 8$, so the ratio of $(M,M_2 \cup M^*,M_1)$ is $\frac{\frac{2}{3} \lvert V(Q) \rvert + \frac{2}{3} + 2\lvert M^* \rvert}{\lvert V(Q) \rvert + 3 \lvert M^* \rvert} \leq \frac{3}{4}$.
By Claim 1, $\lvert V(Q) \rvert \equiv 1$ (mod $3$).

Note that $\frac{2\lvert M_2 \rvert + \lvert M_1 \rvert}{\lvert S \rvert} = \frac{\frac{2}{3}\lvert V(Q) \rvert + 2 \lvert M^* \rvert+1}{\lvert V(Q) \rvert + 3 \lvert M^* \rvert} = \frac{2}{3} + \frac{1}{3(\lvert V(Q) \rvert + 3 \lvert M^* \rvert)} \leq \frac{3}{4}$ since $\lvert V(Q) \rvert \geq 4$.
So if we can choose the edge in $M_1$ such that this edge is not incident with the boundary of $f_c$ for some child $c$ of $p$, then $(S,M_2 \cup M^*,M_1)$ is a removable triple and hence is useful.
Therefore, $|M^*| \geq \frac{|V(Q)|-1}{3}$.
On the other hand, $(S,M_2 \cup M^*,\emptyset)$ is a removable triple with ratio $\frac{2}{3} + \frac{4}{3(\lvert V(Q) \rvert + 3\lvert M^* \rvert)}$, so $16>\lvert V(Q) \rvert + 3 \lvert M^* \rvert \geq 2 \lvert V(Q) \rvert-1$.
Hence, $\lvert V(Q) \rvert = 4$ or $7$.
Similarly, if $S=V(G)$, then $\lvert M^* \rvert \geq \frac{\lvert V(Q) \rvert+2}{3}$, so $\lvert V(Q) \rvert=4$ and $2 \leq \lvert M^* \rvert \leq 3$, but it is easy to check that $\gamma'_R(G) \leq \frac{3}{4}n$ in this case.
Consequently, $S \neq V(G)$, and either $\lvert V(Q) \rvert=4$ and $1 \leq \lvert M^* \rvert \leq 3$, or $\lvert V(Q) \rvert=7$ and $\lvert M^* \rvert=2$.

Denote $Q_1$ by $v_1v_2\ldots v_{|V(Q)|}v_1$.
Without loss of generality, we may assume that $\{v_1\} \subseteq N(V(G)-S) \subseteq \{v_1,v_2\}$.
Let $w$ be a vertex in $N(V(G)-S) \cap N(v_1)$ such that $G-(S \cup \{w\})$ has as less components isomorphic to $C_5$ as possible.
Let $S' = S \cup \{w\}$ and let $N=\{v_{3i+2}v_{3i+3}\colon 0 \leq i \leq \frac{|V(Q)|-1}{3}-1\}$.
For every child $c$ of $p$, there exists an edge $e'_c$ such that $G[S']-(N \cup \{v_1w\} \cup \bigcup_{c}e'_c)$ has no edges, where the union runs through all children $c$ of $p$.
Note that $(S',N \cup \{v_1w\} \cup \bigcup_{c}e'_c, \emptyset)$ is a removable triple with ratio at most $\frac{3}{4}$, where the union runs through all children $c$ of $p$, since $|M^*| \geq \frac{|V(Q)|-1}{3}$.
Therefore, $G-(S \cup \{w\})$ contains a component isomorphic to $C_5$.
Note that $w \in N(C)$ for every component $C$ of $G-(S \cup \{w\})$ isomorphic to $C_5$.
If there exists a component $C$ of $G-(S \cup \{w\})$ isomorphic to $C_5$ satisfies that $N(C) \subseteq \{w,v_1\}$, then there exists $w' \in N(V(G)-S) \cap N(v_1)$ such that $G-(S \cup \{w'\})$ has no component isomorphic to $C_5$, contradicting the choice of $w$.
So every component $C$ of $G-(S \cup \{w\})$ isomorphic to $C_5$ satisfies that $N(C)=\{w,v_2\}$ by Claim 2.
But in this case, there is at most one such component, otherwise $G$ contains a subgraph isomorphic to a subdivision of $K_{2,3}$.
Then there exists $w'' \in V(C) \cap N(v_2)-S$ such that $G-(S \cup \{w''\})$ has no components isomorphic to $C_5$.
Define $N'=\{v_1v_{\lvert V(Q) \rvert}, v_{3i+1}v_{3i+2}: 1 \leq i \leq \frac{\lvert V(Q) \rvert-1}{3}-1 \}$.
For every child $c$ of $p$, there exists an edge $e''_c$ such that $G[S']-(N' \cup \{v_2w''\} \cup \bigcup_{c}e''_c)$ has no edges, where the union runs through all children $c$ of $p$.
Then $(S \cup \{w''\}, N' \cup \{v_2w''\} \cup \bigcup_{c} e''_c, \emptyset)$ is a useful triple, where the union runs through all children $c$ of $p$, a contradiction.
This proves the theorem.
\end{proof}

\begin{theorem}
$\gamma'_R(G) \leq \frac{4}{5}n$ for every graph $G$ on $n$ vertices containing no subgraph isomorphic to a subdivision of $K_{2,3}$.
\end{theorem}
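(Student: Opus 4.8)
The plan is to prove the statement by strong induction on $n=|V(G)|$, using Theorem~\ref{K_{2,3} cut} as the engine. The base case $n=0$ is trivial, and if $G$ is disconnected then each component is again $K_{2,3}$-subdivision-free and has fewer than $n$ vertices, so applying the induction hypothesis to each component and summing gives $\gamma'_R(G)\le\frac45 n$ (the coefficient $\frac45$ being additive over disjoint unions). Hence I may assume that $G$ is connected.

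With $G$ connected I would split into three mutually exhaustive cases. If $G$ itself is the $5$-cycle, then $G$ is $2$-degenerate, so Theorem~\ref{k degenerate} gives $\gamma'_R(G)\le\frac{2\cdot2}{2\cdot2+1}\cdot 5=\frac45 n$. If $G\neq C_5$ and there is no vertex $v$ for which $G-v$ has a component isomorphic to $C_5$, then $G$ satisfies the hypotheses of Theorem~\ref{K_{2,3} cut}, so $\gamma'_R(G)\le\frac34 n\le\frac45 n$. The only remaining case is that $G$ is connected, $G\neq C_5$, and there is a vertex $v$ such that some component $C$ of $G-v$ is isomorphic to $C_5$.

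In this last case I would peel off $v$ together with $V(C)$ and invoke Lemma~\ref{remove}. Write $C=c_1c_2c_3c_4c_5c_1$; since $G$ is connected, $V(C)$ is not a component of $G$, so $v$ has a neighbour in $C$, say $c_1$. Put $S=\{v\}\cup V(C)$, $M_2=\{vc_1,\,c_3c_4\}$ and $M_1=\emptyset$. Because $C$ is a component of $G-v$, every edge incident with $V(C)$ other than the edges of $C$ joins $V(C)$ to $v$; the edge $vc_1$ is adjacent to every edge incident with $v$ and to the cycle edges $c_1c_2$ and $c_5c_1$, while $c_3c_4$ is adjacent to $c_2c_3$ and $c_4c_5$. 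Hence $(S,M_2,M_1)$ is a removable triple of $G$ with $\rho(S,M_2,M_1)=\frac{4}{6}=\frac23\le\frac45$. Since $G-S$ is $K_{2,3}$-subdivision-free on $n-6$ vertices, the induction hypothesis gives $\gamma'_R(G-S)\le\frac45(n-6)$, and then Lemma~\ref{remove} with $\alpha=\frac45$ yields $\gamma'_R(G)\le\gamma'_R(G-S)+\frac45|S|\le\frac45(n-6)+\frac45\cdot 6=\frac45 n$, completing the induction.

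Essentially all of the work is contained in Theorem~\ref{K_{2,3} cut}; the present argument only has to recognise that its hypotheses exclude exactly two configurations---$G$ being $C_5$, or a $C_5$ reappearing after the deletion of a single vertex---and that both are cheap to remove (by $2$-degeneracy and by a ratio-$\frac23$ removable triple, respectively). So I do not anticipate a real obstacle. The points that warrant care are: verifying that $(S,M_2,M_1)$ really dominates every edge incident with $S$, in particular the possibly numerous edges from $v$ to the rest of $G$, all of which are handled by the single edge $vc_1$; and checking that the ``$C_5$ in $G-v$'' case genuinely cannot occur when $G=C_5$ (it cannot, since $C_5-v=P_4$), so that the three cases above are exhaustive.
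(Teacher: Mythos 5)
Your proof is correct and follows essentially the same route as the paper: invoke Theorem~\ref{K_{2,3} cut}, observe that a (minimum) counterexample must then contain a $C_5$ component or a $C_5$ component of $G-v$ for some vertex $v$, and delete that configuration via a cheap removable triple (the paper uses exactly your $\{v\}\cup V(C)$ triple of ratio $\frac{2}{3}$ in the second case). The only cosmetic differences are that you run an explicit induction instead of the paper's minimum-counterexample argument with Lemma~\ref{ratio}, and you dispose of the $G=C_5$/component case via Theorem~\ref{k degenerate} rather than a ratio-$\frac45$ removable triple on the $5$-cycle itself.
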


\begin{proof}
Suppose that $G$ is a counterexample with the minimum size of $|V(G)|$ of this theorem.
By Theorem \ref{K_{2,3} cut}, either $G$ contains $C_5$ as a component, or there exists $v$ such that $G-v$ contains $C_5$ as a component.
For the former, let $C$ be a component of $G$ isomorphic to $C_5$, then $(V(C),M_2,\emptyset)$ is a removable triple of $G$ with ratio $\frac{4}{5}$, where $M_2$ is a maximal matching of $C_5$.
For the latter, let $C'$ be the component of $G-v$ isomorphic to $C_5$, then let $e$ be an edge of $G$ with end $v$ and a vertex $v'$ of $C'$, and let $M_2'$ be the maximal matching of $C'-v'$ of size one, then $(V(C') \cup \{v\}, M_2' \cup \{e'\}, \emptyset)$ is a removable triple of $G$ with ratio less than $\frac{4}{5}$.
Either case contradict Lemma \ref{ratio}.
This proves the theorem.
\end{proof}


\frenchspacing

\end{document}